\title{Bounds of the spectral radius of the induced map on cohomology}
\author{Sven Sandfeldt \\ \href{mailto:svensan@kth.se}{svensan@kth.se}}
\date{June 2021}
\renewcommand{\baselinestretch}{1.10}
\newcommand{\norm}[1]{\left\lVert#1\right\rVert}
\newcommand{\intd}{\text{d}}
\newtheorem*{rep@theorem}{\rep@title}
\newcommand{\newreptheorem}[2]{%
\newenvironment{rep#1}[1]{%
 \def\rep@title{#2 \ref{##1}}%
 \begin{rep@theorem}}%
 {\end{rep@theorem}}}
\newtheorem{mainTheorem}{Theorem}
\newtheorem{mainCorollary}{Corollary}
\newtheorem{theorem}{Theorem}[section]
\newtheorem{lemma}{Lemma}[section]
\newtheorem{definition}{Definition}[section]
\newtheorem*{definition*}{Definition}
\theoremstyle{definition}
\newtheorem{remark}{Remark}
\begin{document}

\renewcommand{\baselinestretch}{1.0} 

\maketitle
\begin{abstract}
In this paper we study the relationship between Lyapunov exponents and the induced map on cohomology for $C^{1}-$diffeomorphisms on compact manifolds. We show that if the induced map on cohomology has spectral radius strictly larger than 1, then the diffeomorphism has an invariant ergodic measure with at least one positive Lyapunov exponent. Furthermore, if the diffeomorphism also preserves a continuous volume form then it has an invariant ergodic measure with at least one positive and one negative Lyapunov exponent, in agreement with Shub's entropy conjecture. We also consider diffeomorphisms preserving a measure equivalent to volume. In this case we show that if the Lyapunov metric satisfies an integrability condition then volume must be a measure of maximal entropy.
\end{abstract}

\section{Introduction}

The Lyapunov exponents of a general linear cocycle are difficult to evaluate. One reason for this difficulty is that calculating the Lyapunov exponents of a linear cocycle requires knowledge of the values of the cocycle for all future times, and this information is in general not available. Besides, even when it is available, the calculations involved are not tractable. On the other hand, in the special case of the derivative cocycle, the existence of non-zero Lyapunov exponents can have strong implications for the dynamics of a diffeomorphism, see for example \cite{Katok1} for hyperbolic measure. This motivates the search for estimates of the Lyapunov exponents with quantities that are easier to evaluate.

It follows from the Ruelle inequality, see \cite{LectureNotesPollicott}, that a lower bound for the sum of positive Lyapunov exponents is given by the metric entropy. More explicitly, let $f\in\text{Diff}^{1}(X)$ be a $C^{1}-$diffeomorphism of a compact smooth manifold $X$. Given some $f-$invariant ergodic Borel probability measure $\mu\in\mathcal{M}(X)$ we have the inequality
\begin{align*}
h_{\mu}(f)\leq\sum_{\lambda_{i}(x,Df,\mu) > 0}\lambda_{i}(x,Df,\mu),\quad \mu-\text{a.e}
\end{align*}
where $h_{\mu}(f)$ is the metric entropy of $f$ with respect to $\mu$ and $\lambda_{i}(x,Df,\mu)$ are the Lyapunov exponents of the derivative cocycle of $f$ with respect to $\mu$. It follows in particular that if $h_{\mu}(f) > 0$ then there must exist at least one positive Lyapunov exponent. Moreover, using the fact that $h_{\mu}(f^{-1}) = h_{\mu}(f)$ the existence of one negative Lyapunov exponent also follows. By the variational principle, see \cite{KatokIntroDynSyst}, there is a sufficient condition for the existence of a measure with positive metric entropy. If $h_{\text{top}}(f)$ denotes the topological entropy of $f\in\text{Diff}^{1}(X)$ then the variational principle states
\begin{align*}
h_{\text{top}}(f) = \sup_{\mu}h_{\mu}(f)
\end{align*}
where the supremum is over all ergodic measures. It follows that if $h_{\text{top}}(f) > 0$ then there has to exist at least one ergodic measure $\mu$ such that $h_{\mu}(f) > 0$ and hence there exists a measure with at least one positive and one negative Lyapunov exponent. So to find a sufficient condition for the existence of non-zero Lyapunov exponents it suffices to find a sufficient condition for the topological entropy to be positive.

There has been a lot of work on finding lower bounds of the topological entropy. Notably, it was shown by Misiurewicz and Przytycki, \cite{MisiurewiczAndPrzytycki1}, that for a $C^{1}-$map the logarithm of the degree is a lower bound for the topological entropy. That is, let $f\in\text{End}^{1}(X)$ be orientation preserving, then
\begin{align*}
\log(\deg(f))\leq h_{\text{top}}(f)
\end{align*}
where $\deg(f)$ is the topological degree of $f$. It should be noted that since $\deg(f)$ is the eigenvalue of the induced map of $f$ on the top homology group, one can interpret the result as follows: the topological entropy is an upper bound for the homological growth. It was shown by Manning, \cite{Manning1}, that for $f\in\text{Diff}^{1}(X)$ the topological entropy is an upper bound for the logarithm of the spectral radius of the induced map on the first homology. That is, if $f_{*,1}:H_{1}(X;\mathbb{R})\to H_{1}(X;\mathbb{R})$ is the induced map on the first homology group and $\text{sp}(f_{*,1})$ is the spectral radius, then
\begin{align*}
\log(\text{sp}(f_{*,1}))\leq h_{\text{top}}(f)
\end{align*}
where again this result can be interpreted as the topological entropy being an upper bound for the homological growth. The result of Manning has also been generalized by Bowen to the induced map on the fundamental group, see \cite{Bowen1}. It was conjectured by Shub, see \cite{Shub1}, that the results in \cite{MisiurewiczAndPrzytycki1} and \cite{Manning1} was part of a more general principle. Namely, the topological entropy is a upper bound for homological growth. More concretely, let $f:X\to X$ be a $C^{1}-$map and let $f_{*}$ be the induced map on the real homology groups of $X$, that is 
\begin{align*}
& f_{*}:\bigoplus_{k = 0}^{n}H_{k}(X;\mathbb{R})\to\bigoplus_{k = 0}^{n}H_{k}(X;\mathbb{R}),\\
& f_{*}|_{H_{k}(X;\mathbb{R})} = f_{*,k}:H_{k}(X;\mathbb{R})\to H_{k}(X;\mathbb{R})
\end{align*}
where $\dim(X) = n$. Then Shub conjectured that the bound
\begin{align*}
\log\text{sp}(f_{*})\leq h_{\text{top}}(f)
\end{align*}
should hold. This conjecture is commonly known as Shubs entropy conjecture or simply the entropy conjecture. The entropy conjecture is sharp in the sense that there exist Lipschitz homeomorphisms $f\in\text{Homeo}(X)$ such that $f$ does not satisfy the entropy conjecture, see \cite{Shub1}. It follows that the differentiability should be crucial in a proof of the entropy conjecture. There are partial results on the entropy conjecture. Notably, the result of Manning \cite{Manning1} combined with Poincaré duality proves the entropy conjecture for all manifolds of dimension at most $3$. In \cite{Yomdin1} Yomdin shows that the entropy conjecture holds for $C^{\infty}-$maps. Actually, Yomdin's result is stronger than the entropy conjecture in that he shows that the topological entropy is an upper bound for the volume growth. And the volume growth, in turn, is larger than the homological growth. There also exist partial results on the entropy conjecture by restricting the type of manifold considered. In \cite{Marzantowicz1} it is shown that the entropy conjecture holds for every continuous map on a nilmanifold.

The main result of this paper is that some of the consequences of the entropy conjecture still hold without the full conjecture. In particular, we apply a variational principle from \cite{SchreiberSubadditiveFunction} to show that any diffeomorphism $f:X\to X$ with spectral radius larger than one has at least one ergodic measure with a positive Lyapunov exponent, see Corollary C. More precisely, we show that there is some ergodic measure $\mu$ such that the following inequality holds
\begin{align}
\label{Eq1}
\log\text{sp}(f_{*})\leq\Sigma(x,Df,\mu),\quad\mu-a.e
\end{align}
where $\Sigma(x,Df,\mu)$ is the sum of positive Lyapunov exponents with respect to $\mu$. We can, however, not guarantee the existence of an ergodic measure with at least one positive and one negative exponent. On the other hand, if the diffeomorphism $f$ also preserves a continuous volume form, then the sequence of determinants $\det(Df^{n})$ is uniformly bounded, therefore by Oseledec's theorem the Lyapunov exponents of every ergodic measure must sum to zero. It follows from our results that if a diffeomorphism has spectral radius larger then $1$ and preserves a continuous volume, then there is an ergodic measure with at least one positive and one negative Lyapunov exponent.

A natural question in light of $\eqref{Eq1}$ is for what measures do we obtain the inequality $\eqref{Eq1}$. In particular, if $f$ preserves a volume $\intd V$, under what conditions can we obtain the inequality 
\begin{align*}
\log\text{sp}(f_{*})\leq\Sigma(x,Df,\mu)
\end{align*}
for $\intd\mu = \intd V$? We show that this is possible provided that the Lyapunov metric satisfies an integrability condition, see Corollary D. Using Pesin's entropy formula this also gives a positive answer to the entropy conjecture for conservative diffeomorphisms where the Lyapunov metric satisfies an integrability condition. Actually combining our results with the results of \cite{KozlovskiIntForm} we also show that in the $C^{\infty}-$setting the integrability condition from Corollary D also implies that the volume is a measure of maximal entropy for $C^{\infty}-$diffeomorphisms.

\textbf{Structure of paper:} In section $2$ we formulate the main result of the paper and briefly discuss the proofs. In section $3$ we go through some background from smooth ergodic theory, differential topology and Hodge theory and simultaneously fix notation. In section $4$ we prove some technical results which are used to prove the obtain the main results of the paper. In section $5$ we prove corollaries of the results of section $4$. 

\textbf{Acknowledgement:} This research has received support from the Swedish Research Council grant 2019-04641. I would like to thank Danijela Damjanovic for providing support during the writing of this paper.



\section{Main results}

In this section we state the main results of the paper. The aim is to obtain bounds for the spectral radius in terms of Lyapunov exponents.

Let $(X,g)$ be a smooth, oriented, compact Riemannian manifold without boundary and metric $g$. We denote by $V_{g}$ the volume form induced by the metric $g$, we shall always assume that $g$ is chosen such that $V_{g}(X) = 1$. Let $f:X\to X$ be a $C^{1}-$map. We denote by $H^{k}(f):H^{k}(X)\to H^{k}(X)$ the induced map on the $k'$th real cohomology group, or equivalently the $k'$th de Rahm cohomology group. Let $\Omega_{\mathbb{C}}^{k}(X)$ be the space of complex smooth $k-$forms over $X$ and let
\begin{align*}
\intd:\Omega_{\mathbb{C}}^{k}(X)\to\Omega_{\mathbb{C}}^{k+1}(X)
\end{align*}
denote the exterior derivative. If $[\omega]\in H^{k}(X)\otimes\mathbb{C}$ is an eigenvector for $H^{k}(f)$ with eigenvalue $e^{\lambda}\in\mathbb{C}$ then there is a harmonic $k-$form $\omega$ and a continuous $k-$form $\alpha\in\overline{\text{Im}(\intd)}$ such that
\begin{align*}
\tag{Eq$_{\lambda,k}$}
\label{MainEquationIntroduction}
f^{*}\omega = e^{\lambda}\omega + \alpha
\end{align*}
from Lemma $3.1$. We define
\begin{definition}
We say that $\omega,\alpha\in L^{2}\Omega_{\mathbb{C}}^{k}(X)$ is a solution of $\eqref{MainEquationIntroduction}$ if $\omega$ and $\alpha$ satisfy the equation in $\eqref{MainEquationIntroduction}$ and if $\omega\in\mathcal{H}^{k}$, $\alpha\in\overline{\text{Im}(\intd)}$.
\end{definition}
with this definition there is a bijective correspondence of the non-trivial solutions to $\eqref{MainEquationIntroduction}$ and the eigenvalues of $H^{k}(f)$, see Lemma $3.1$.

We define
\begin{align*}
\lambda^{+}(Df^{\wedge k}) = \lim_{n\to\infty}\sup_{x\in X}\frac{1}{n}\log\norm{D_x\left(f^{n}\right)^{\wedge k}}
\end{align*}
Our first result concerning the spectral radius is essentially the elementary bound of the volume growth, but we state it as a theorem since it will be important in the remainder.
\begin{mainTheorem}
Let $f:X\to X$ be a $C^{1}-$diffeomorphism and let $k$ be an integer between $1$ and $\dim(X)$. If $\omega,\alpha\in L^{2}\Omega_{\mathbb{C}}^{k}(X)$ is a non-trivial solution of $\eqref{MainEquationIntroduction}$, then $\text{Re}(\lambda)\leq\lambda^{+}(Df^{\wedge k})$.
\end{mainTheorem}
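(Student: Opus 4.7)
The plan is to iterate the defining equation $f^{\ast}\omega = e^{\lambda}\omega + \alpha$, use Hodge orthogonality to isolate the harmonic part, and then control the pullback pointwise using $\|(D_{x}f^{n})^{\wedge k}\|$.

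First I would iterate. Applying $f^{\ast}$ inductively to $f^{\ast}\omega = e^{\lambda}\omega + \alpha$ gives
\begin{align*}
(f^{n})^{\ast}\omega = e^{n\lambda}\omega + \beta_{n}, \qquad \beta_{n} = \sum_{j=0}^{n-1} e^{j\lambda}(f^{n-1-j})^{\ast}\alpha.
\end{align*}
Since $f$ is a $C^{1}$ diffeomorphism of a compact manifold, $f^{\ast}$ is a bounded operator on $L^{2}\Omega^{k}(X)$ and commutes with $d$; hence $f^{\ast}$ maps $\mathrm{Im}(d)$ into itself and, by continuity, $\overline{\mathrm{Im}(d)}$ into itself. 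Thus every summand of $\beta_n$ lies in $\overline{\mathrm{Im}(d)}$, so $\beta_{n}\in\overline{\mathrm{Im}(d)}$.

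Next I would exploit Hodge orthogonality. By the Hodge decomposition $L^{2}\Omega^{k}_{\mathbb{C}}(X) = \mathcal{H}^{k}\oplus\overline{\mathrm{Im}(d)}\oplus\overline{\mathrm{Im}(d^{\ast})}$, the harmonic form $\omega$ is $L^{2}$-orthogonal to $\beta_{n}$, giving
\begin{align*}
\langle (f^{n})^{\ast}\omega,\,\omega\rangle_{L^{2}} = e^{n\lambda}\|\omega\|_{L^{2}}^{2}.
\end{align*}
Taking absolute values isolates $e^{n\,\mathrm{Re}(\lambda)}\|\omega\|_{L^{2}}^{2}$ on one side.

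For the upper bound, I would use the pointwise pullback estimate $|(f^{n})^{\ast}\omega|_{g}(x) \leq \|(D_{x}f^{n})^{\wedge k}\|\cdot |\omega|_{g}(f^{n}(x))$, valid because pulling back a $k$-form at $x$ is the composition of $\omega_{f^n(x)}$ with $(D_xf^n)^{\wedge k}$. To avoid change-of-variables Jacobians, I would pair this with an $L^{\infty}$--$L^{1}$ estimate rather than Cauchy--Schwarz:
\begin{align*}
|\langle (f^{n})^{\ast}\omega, \omega\rangle_{L^{2}}| \leq \Big(\sup_{x\in X}\|(D_{x}f^{n})^{\wedge k}\|\Big)\cdot \|\omega\|_{\infty}\cdot\|\omega\|_{L^{1}}.
\end{align*}
Since $\omega$ is harmonic it is smooth, and $X$ is compact, so $\|\omega\|_{\infty}$ and $\|\omega\|_{L^{1}}$ are finite constants independent of $n$.

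Combining, taking $\tfrac{1}{n}\log$ of both sides and letting $n\to\infty$ makes the constants disappear and yields $\mathrm{Re}(\lambda) \leq \lambda^{+}(Df^{\wedge k})$. The only real subtlety is step one, namely verifying that $f^{\ast}$ is continuous on $L^{2}$ and preserves $\overline{\mathrm{Im}(d)}$ for a merely $C^{1}$ diffeomorphism; I would handle this using compactness of $X$ to bound $\|Df\|_{\infty}$ and $\|Df^{-1}\|_{\infty}$ uniformly, which controls both the operator norm of $f^{\ast}$ on $L^{2}$ and the pointwise change-of-variables factor. Once that is in place, the rest of the argument is the concrete estimate above.
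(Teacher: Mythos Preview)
Your proposal is correct and follows essentially the same route as the paper: iterate the equation, use Hodge orthogonality to extract $e^{n\lambda}\|\omega\|_{L^2}^2$ from $\langle (f^n)^*\omega,\omega\rangle$, then control the pullback pointwise by $\|(D_xf^n)^{\wedge k}\|\,\|\omega\|_{C^0}$ and take $\tfrac1n\log$. The only difference is that the paper first records the intermediate bound $\text{Re}(\lambda)\le\liminf_n \tfrac1n\log\int_X\|(D_xf^n)^{\wedge k}\|\,dV_g$ (their Lemma~4.2) and then passes to the supremum, because that integral estimate is reused later for Theorem~B and Corollary~F; you bypass the integral and go straight to $\sup_x\|(D_xf^n)^{\wedge k}\|$, which is marginally more direct for Theorem~A alone. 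Your flagged subtlety about $f^*$ preserving $\overline{\text{Im}(d)}$ for a merely $C^1$ map is exactly what the paper isolates as Lemma~3.1, and they handle it by $C^\infty$ approximation rather than by directly invoking ``$f^*$ commutes with $d$'' (which is delicate since $f^*\omega$ need not be differentiable).
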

As an immediate consequence of Theorem A we can consider the case of uniformly subexponential maps $f:X\to X$. We say that a $C^{1}-$diffeomorphism $f:X\to X$ is \textit{uniformly subexponential} if every Lyapunov exponent with respect to every invariant measure is $0$. Equivalently $f$ is uniformly subexponential if $\lambda^{+}(Df^{\wedge k}) = 0$ for every $k$. So we obtain the following
\begin{mainCorollary}
If $f:X\to X$ is a uniformly subexponential $C^{1}-$diffeomorphism then 
\begin{align*}
\log\text{sp}(f_{*}) = 0.
\end{align*}
\end{mainCorollary}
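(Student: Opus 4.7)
The plan is to combine Theorem A with the same result applied to $f^{-1}$ to squeeze every eigenvalue of $H^k(f)$ to the unit circle, then transfer the conclusion from cohomology to homology by duality.

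First, I would unpack what uniform subexponentiality gives us: by hypothesis $\lambda^{+}(Df^{\wedge k}) = 0$ for every $k$. By the correspondence (Lemma 3.1 in the paper) between non-trivial solutions $(\omega,\alpha)$ of $\eqref{MainEquationIntroduction}$ and eigenvalues $e^{\lambda}$ of $H^{k}(f)$, Theorem A immediately implies that every eigenvalue $e^{\lambda}$ of $H^{k}(f)$ satisfies $|e^{\lambda}| = e^{\text{Re}(\lambda)} \leq e^{\lambda^{+}(Df^{\wedge k})} = 1$. Thus $\text{sp}(H^{k}(f)) \leq 1$ for each $k$.

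To obtain the matching lower bound, I would observe that $f^{-1}$ is again uniformly subexponential: the $Df^{-1}$-Lyapunov exponents with respect to any $f$-invariant ergodic measure are just the negatives of those of $Df$, hence all zero, so $\lambda^{+}((Df^{-1})^{\wedge k}) = 0$ for all $k$. Applying Theorem A to $f^{-1}$, every eigenvalue of $H^{k}(f^{-1}) = H^{k}(f)^{-1}$ has modulus at most $1$, which is equivalent to saying every eigenvalue of $H^{k}(f)$ has modulus at least $1$. Together with the previous step, every eigenvalue of $H^{k}(f)$ lies on the unit circle, so $\text{sp}(H^{k}(f)) = 1$ for every $k$.

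Finally I would pass from cohomology to homology. The de Rham / Kronecker pairing identifies $H^{k}(X;\mathbb{R})$ with the dual of $H_{k}(X;\mathbb{R})$, and under this identification $H^{k}(f)$ is the transpose of $f_{*,k}$; in particular the two operators have the same characteristic polynomial and hence the same spectral radius. Therefore $\text{sp}(f_{*,k}) = \text{sp}(H^{k}(f)) = 1$ for each $k$, and taking the maximum over $k$ gives $\text{sp}(f_{*}) = 1$, i.e.\ $\log\text{sp}(f_{*}) = 0$. There is essentially no hard step here; the only point to be careful about is that Theorem A applies in both directions, which requires noting that $f$ being a diffeomorphism (rather than merely a $C^{1}$-map) is what allows the argument with $f^{-1}$ to go through.
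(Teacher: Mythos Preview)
Your proof is correct and follows essentially the paper's approach: Theorem~A gives the upper bound $\text{sp}(H^{k}(f))\leq 1$, and the universal coefficients theorem transfers the conclusion from cohomology to homology. The only minor difference is in the lower bound: you apply Theorem~A again to $f^{-1}$ (which in fact yields the stronger statement that every eigenvalue of $H^{k}(f)$ lies on the unit circle), whereas the paper simply takes $\text{sp}(f_{*})\geq 1$ as automatic, since $f_{*,0}$ is the identity on $H_{0}(X;\mathbb{R})$.
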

Let $\lambda^{+}(Df^{\wedge k}),\mu)$ be the average maximal Lyapunov exponent of $Df^{\wedge k}$ with respect to an invariant measure $\mu$ defined by
\begin{align*}
\lambda^{+}(Df^{\wedge k},\mu) = \lim_{n\to\infty}\frac{1}{n}\int_{X}\log\norm{\left(D_{x}f^{n}\right)^{\wedge k}}\intd\mu
\end{align*}
and let $\Lambda_{k}(Df,\mu)$ be the sum of the $k$ largest Lyapunov exponents (counting with multiplicity) with respect to the measure $\mu$. Using the results of \cite{SchreiberSubadditiveFunction}, $\lambda^{+}(x,Df^{\wedge k},\mu) = \Lambda_{k}(x,Df,\mu)$ and the fact that
\begin{align*}
\mu\mapsto\lambda^{+}(Df^{\wedge k},\mu)
\end{align*}
is upper semi-continuous we obtain the following corollary
\begin{mainCorollary}
Let $f:X\to X$ be a $C^{1}-$diffeomorphism and let $k$ be an integer between $1$ and $\dim(X)$. If $\omega,\alpha\in L^{2}\Omega_{\mathbb{C}}^{k}(X)$ is a non-trivial solution to $\eqref{MainEquationIntroduction}$, or equivalently if $e^{\lambda}$ is an eigenvalue for $H^{k}(f)$, then there exist some invariant measure $\nu_{k}\in\mathcal{M}_{\text{erg}}(X)$ such that
\begin{align*}
\text{Re}(\lambda)\leq\Lambda_{k}(Df,\nu_{k})
\end{align*}
in particular it holds that
\begin{align*}
\log\text{sp}(f_{*}) = \log\text{sp}(H^{*}(f))\leq\Sigma(Df,\nu)
\end{align*}
for some $\nu\in\mathcal{M}_{\text{erg}}(X)$.
\end{mainCorollary}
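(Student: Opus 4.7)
The plan is to combine Theorem~A, which furnishes a uniform (measure-independent) upper bound $\text{Re}(\lambda) \leq \lambda^+(Df^{\wedge k})$ on the real part of any eigenvalue of $H^k(f)$, with the subadditive variational principle of Schreiber \cite{SchreiberSubadditiveFunction} in order to replace the worst-case growth rate $\lambda^+(Df^{\wedge k})$ by a Lyapunov-type quantity attached to a single ergodic measure. The three auxiliary ingredients already flagged just before the statement --- Schreiber's variational principle, the upper semi-continuity of $\mu \mapsto \lambda^+(Df^{\wedge k}, \mu)$, and the exterior-power identity $\lambda^+(x, Df^{\wedge k}, \mu) = \Lambda_k(x, Df, \mu)$ coming from Oseledec's theorem --- are precisely what is needed to make this substitution.

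Concretely, I first apply Theorem~A to the given non-trivial solution $(\omega, \alpha)$ to obtain $\text{Re}(\lambda) \leq \lambda^+(Df^{\wedge k})$. Since $n \mapsto \log \sup_{x \in X} \norm{D_x(f^n)^{\wedge k}}$ is subadditive, the subadditive variational principle gives
\begin{align*}
\lambda^+(Df^{\wedge k}) = \sup_{\mu \in \mathcal{M}_{\text{inv}}(X)} \lambda^+(Df^{\wedge k}, \mu),
\end{align*}
and upper semi-continuity of the functional on the compact space $\mathcal{M}_{\text{inv}}(X)$ shows the supremum is attained by some invariant $\mu_0$. I then pass to an ergodic component: decomposing $\mu_0 = \int \mu_\omega \, \intd\mathbb{P}(\omega)$, the functional $\mu \mapsto \lambda^+(Df^{\wedge k}, \mu)$ is the $\mu$-integral of the pointwise limit $\lambda^+(x, Df^{\wedge k}, \mu) = \Lambda_k(x, Df, \mu)$, so at least one ergodic component $\nu_k$ realizes a value at least $\lambda^+(Df^{\wedge k}, \mu_0)$. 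The identity $\lambda^+(Df^{\wedge k}, \nu_k) = \Lambda_k(Df, \nu_k)$ then yields $\text{Re}(\lambda) \leq \Lambda_k(Df, \nu_k)$, which is the first inequality.

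For the ``in particular'' statement I invoke Poincaré duality to identify $\text{sp}(f_*) = \text{sp}(H^*(f))$, and pick a degree $k_0$ realizing $\text{sp}(H^{k_0}(f)) = \text{sp}(f_*)$ together with an eigenvalue $e^{\lambda_0}$ with $|e^{\lambda_0}| = \text{sp}(H^{k_0}(f))$. Applying the first part to this eigenvalue produces an ergodic $\nu := \nu_{k_0}$ with
\begin{align*}
\log \text{sp}(f_*) = \text{Re}(\lambda_0) \leq \Lambda_{k_0}(Df, \nu) \leq \Sigma(Df, \nu),
\end{align*}
the final inequality recording simply that the sum of the $k_0$ largest exponents is no larger than the sum of the strictly positive ones.

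The step I expect to require the most care is the reduction from $\mu_0$ to the ergodic component $\nu_k$: the functional $\mu \mapsto \lambda^+(Df^{\wedge k}, \mu)$ is defined as a limit of integrals of suprema, so to see that it is affine (and in particular averages over ergodic components) I need to convert it, via Kingman and Oseledec applied to the exterior-power cocycle $(Df^n)^{\wedge k}$, into the honest $\mu$-integral of the measurable pointwise function $\Lambda_k(x, Df, \mu)$. Once this identification is available the rest is a direct assembly of Theorem~A, Schreiber's variational principle, upper semi-continuity on $\mathcal{M}_{\text{inv}}(X)$, and the exterior-power identification of Lyapunov exponents.
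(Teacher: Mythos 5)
Your argument follows essentially the same route as the paper: Theorem~A gives $\text{Re}(\lambda)\leq\lambda^{+}(Df^{\wedge k})$, Schreiber's variational principle and the upper semi-continuity of $\mu\mapsto\lambda^{+}(Df^{\wedge k},\mu)$ produce a maximizing measure, the identity $\lambda^{+}(Df^{\wedge k},\mu)=\Lambda_{k}(Df,\mu)$ converts this into the stated bound, and $\Lambda_{k}\leq\Sigma$ plus passing to homology gives the ``in particular'' clause; your explicit reduction from an invariant maximizer to an ergodic component via the ergodic decomposition is a detail the paper leaves implicit, and it is handled correctly. One small correction: the identification $\text{sp}(f_{*})=\text{sp}(H^{*}(f))$ should be justified by the universal coefficient theorem (over $\mathbb{R}$, $H^{k}(f)$ is the transpose of $f_{*,k}$, hence has the same spectrum), as the paper does, not by Poincar\'e duality --- duality relates $H^{k}(f)$ on $H^{k}(X;\mathbb{R})$ to the \emph{inverse} of $f_{*,n-k}$ (for an orientation-preserving diffeomorphism), so it does not preserve spectral radii degree by degree and would not by itself yield the claimed equality.
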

As a consequence of Corollary B we have that if $\text{sp}(f_{*}) > 1$ then there is some measure $\nu\in\mathcal{M}_{\text{erg}}(X)$ with at least one positive Lyapunov exponent. If we add the assumption that $f:X\to X$ preserves a continuous volume form then for every ergodic measure $\mu$ we have
\begin{align*}
\sum_{i = 1}^{\dim(X)}\lambda_{i}(Df,\mu) = 0,\quad \lambda_{i}(Df,\mu) := \int_{X}\lambda_{i}(x,Df,\mu)\intd\mu(x)
\end{align*}
that is, the sum of Lyapunov exponents vanishes and we obtain the following Corollary
\begin{mainCorollary}
If $f:X\to X$ is a $C^{1}-$diffeomorphism with $\text{sp}(f_{*}) > 1$ then $f$ has a invariant ergodic measure with at least one positive Lyapunov exponent. Furthermore, if $f$ also preserves a continuous volume form then $f$ has a invariant ergodic measure with at least one positive and one negative Lyapunov exponent.
\end{mainCorollary}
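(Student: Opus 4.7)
My plan is to derive Corollary C essentially as a packaging of Corollary B together with two standard facts: Poincaré duality and the Oseledec identity for the top Lyapunov exponent (sum of all Lyapunov exponents).

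For the first statement, I would argue as follows. Since $X$ is a compact oriented manifold, Poincaré duality (together with the universal coefficient theorem identifying real cohomology with the dual of real homology) gives $\text{sp}(f_{*}) = \text{sp}(H^{*}(f))$. Thus $\text{sp}(f_{*}) > 1$ forces some $H^{k}(f)$ to have an eigenvalue $e^{\lambda}$ with $\text{Re}(\lambda) > 0$. Corollary B then provides an ergodic measure $\nu_{k}\in\mathcal{M}_{\text{erg}}(X)$ with $0 < \text{Re}(\lambda) \leq \Lambda_{k}(Df,\nu_{k})$. Since $\Lambda_{k}(Df,\nu_{k})$ is the sum of the $k$ largest Lyapunov exponents of $\nu_{k}$, its strict positivity forces at least one of them to be positive. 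Equivalently, one can read off the second inequality of Corollary B: $\Sigma(Df,\nu) \geq \log\text{sp}(f_{*}) > 0$, so the sum of positive Lyapunov exponents of $\nu$ is strictly positive, hence at least one positive exponent exists.

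For the second statement, suppose in addition that $f$ preserves a continuous volume form $\omega_{0}$. Then $f^{*}\omega_{0} = \omega_{0}$, which when compared with the Riemannian volume $V_{g}$ in a chart gives $|\det(D_{x}f^{n})| \cdot (\omega_{0}/V_{g})(f^{n}x) = (\omega_{0}/V_{g})(x)$. Since $\omega_{0}/V_{g}$ is a continuous positive function on the compact manifold $X$, it is bounded above and below by positive constants, so $|\det(D_{x}f^{n})|$ is uniformly bounded away from $0$ and $\infty$ in $n$ and $x$. By Oseledec's theorem, for any $f$-invariant ergodic measure $\mu$,
\begin{align*}
\sum_{i=1}^{\dim(X)}\lambda_{i}(x,Df,\mu) = \lim_{n\to\infty}\frac{1}{n}\log|\det(D_{x}f^{n})| = 0 \quad \mu-\text{a.e.}
\end{align*}
Applying this to the measure $\nu$ produced by the first part, whose Lyapunov exponents already include a positive one, the vanishing of the total sum forces at least one exponent to be negative.

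The main step where there is anything to check is the identification $\text{sp}(f_{*}) = \text{sp}(H^{*}(f))$ via Poincaré duality (routine) and the uniform bound on $|\det(Df^{n})|$ under the continuous-volume-form hypothesis (the subtle point being that mere measure preservation would not suffice — continuity of the form is what delivers the two-sided bound on the Jacobian along orbits). All of the genuinely dynamical content has already been carried out in Theorem A and Corollary B; Corollary C is then a direct consequence.
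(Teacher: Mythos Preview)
Your proposal is correct and matches the paper's own argument: Corollary B already supplies an ergodic $\nu$ with $\Sigma(Df,\nu)\geq\log\text{sp}(f_{*})>0$, and when $f$ preserves a continuous volume form the uniform bound on $|\det D_{x}f^{n}|$ forces $\sum_{i}\lambda_{i}(Df,\nu)=0$ by Oseledec, hence a negative exponent. One minor remark: Poincar\'e duality is not needed for the identification $\text{sp}(f_{*})=\text{sp}(H^{*}(f))$---the universal coefficients theorem alone (which you also invoke) suffices, since over $\mathbb{R}$ the induced map on cohomology is the transpose of that on homology and therefore has the same spectrum.
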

For any metric vector bundle $\mathcal{E}\to X$ with metric $h$ we can define the space of $L^{p}-$sections as the sections $\sigma:X\to\mathcal{E}$ such that
\begin{align*}
\norm{\sigma}_{L^{p}}^{p} = \int_{X}\norm{\sigma(x)}_{h}^{p}\intd V_{g}(x) < \infty
\end{align*}
where $\norm{\cdot}_{h}$ is the norm induced by $h$. We also allow $p < 1$, even though in this case the integral above does not necessarily define norm. In particular any bundle
\begin{align*}
\text{T}_{r}^{s}X = \left(\text{T}X\right)^{\otimes s}\otimes\left(\text{T}^{*}X\right)^{\otimes r},\quad\Lambda^{k}(\text{T}X),\quad\Lambda^{k}(\text{T}^{*}X)
\end{align*}
can naturally be given an $L^{p}-$structure by the Riemannian metric on $X$. Any metric $h$ on $X$ defines a section $h:X\to\text{T}_{2}^{0}X = \text{T}^{*}X\otimes\text{T}^{*}X$. We say that $h$ is a $L^{p}-$metric if $\norm{h}_{L^{p}} < \infty$. Let $f:X\to X$ be a $C^{1}-$diffeomorphism preserving a measure $V$ equivalent to $V_{g}$. We denote by $\lambda_{i}(x,Df,V)$ the $i'$th Lyapunov exponent of $Df$ with respect to $V$ counted with multiplicity. Let $\Tilde{\lambda}_{i}(x,Df,V)$ be the $i'$th Lyapunov exponent counted without multiplicity. For $V-$almost every $x\in X$ we define the Lyapunov splitting $H_{i}(x)\subset T_{x}X$ defined by
\begin{align*}
\lim_{n\to\infty}\frac{1}{n}\log\norm{D_{x}f^{n}(v)} = \Tilde{\lambda}_{i}(x,Df,V),\quad v\in H_{i}(x)\setminus\{0\}.
\end{align*}
We define the (family of) Lyapunov metrics, see \cite{KatokIntroDynSyst}, on $H_{i}(x)$ by
\begin{align*}
h_{i}^{\varepsilon} := \sum_{n\in\mathbb{Z}}e^{-2|n|\varepsilon}e^{-2n\Tilde{\lambda}_{i}(x,Df,V)}\left(f^{n}\right)^{*}g 
\end{align*}
where $\left(f^{n}\right)^{*}g$ is the pullback of $g$
\begin{align*}
\left(f^{n}\right)^{*}g_{x}(u,v) = g_{f^{n}x}\left(D_{x}f^{n}(u),D_{x}f^{n}(v)\right).
\end{align*}
We can extend this to a metric on all of $\text{T}_{x}X$ be defining the inner product between $u\in H_{i}(x)$ and $v\in H_{j}(x)$ to be $0$ for $i\neq j$. That is we define
\begin{align*}
h^{\varepsilon} := \sum_{i}h_{i}^{\varepsilon}
\end{align*}
This defines a measurable $V_{g}-$almost everywhere defined metric. We can now state our second main result
\begin{mainTheorem}
Let $k$ be an integer between $1$ and $\dim(X)$ and let $f:X\to X$ be a $C^{1}-$diffeomorphism preserving a measure $V$ equivalent to the Riemannian volume. If $h^{\varepsilon}$ is $L^{k/2}$ and $\omega,\alpha\in L^{2}\Omega_{\mathbb{C}}^{k}(X)$ is a non-trivial solution to $\eqref{MainEquationIntroduction}$ then
\begin{align*}
\text{Re}(\lambda)\leq\norm{\Lambda_{k}(x,Df,V)}_{L^{\infty}} + k\varepsilon.
\end{align*}
\end{mainTheorem}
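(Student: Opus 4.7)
The plan is to iterate the relation to obtain $(f^{n})^{*}\omega = e^{n\lambda}\omega + \alpha_{n}$ with $\alpha_{n}\in\overline{\text{Im}(\intd)}$, and then pit a Hodge-theoretic lower bound on $\|(f^{n})^{*}\omega\|_{L^{2}(V_{g},g)}$ against a dynamical upper bound read in the Lyapunov metric $h^{\varepsilon}$. The iteration is routine: pullback is bounded on $L^{2}\Omega^{k}$ (compactness of $X$ together with boundedness of $Df^{\pm 1}$ and the Jacobian) and commutes with $\intd$, so $f^{*}$ preserves $\overline{\text{Im}(\intd)}$; an induction supplies the iterated identity. Since $\omega\in\mathcal{H}^{k}$ is nontrivial and the Hodge decomposition is orthogonal in $L^{2}(V_{g},g)$, pairing against $\omega$ forces $\langle(f^{n})^{*}\omega,\omega\rangle_{L^{2}}=e^{n\lambda}\|\omega\|_{L^{2}}^{2}$, and Cauchy--Schwarz yields the lower bound
\begin{align*}
e^{n\,\text{Re}(\lambda)}\|\omega\|_{L^{2}}^{2}\leq\int_{X}|(f^{n})^{*}\omega|_{g}\,|\omega|_{g}\,\intd V_{g}.
\end{align*}

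For the pointwise upper bound I would assemble three ingredients. First, two metric comparisons on $\Lambda^{k}T^{*}X$: diagonalizing $g^{-1}h^{\varepsilon}$ at $x$ and keeping the $k$ largest eigenvalues gives $|\xi|_{g}\leq\|h^{\varepsilon}_{x}\|_{g}^{k/2}|\xi|_{h^{\varepsilon}}$ for $k$-forms; conversely, the one-sided restriction $h_{i}^{\varepsilon}|_{H_{i}}\geq g|_{H_{i}}$ (which comes for free from the $n=0$ term in the defining series), together with a trace bound on the $g$-Gram matrix of an $h^{\varepsilon}$-ON basis, gives a dimensional reverse estimate $|\xi|_{h^{\varepsilon}}\leq d^{k/2}|\xi|_{g}$, where $d=\dim X$. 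Second, the pullback-operator bound $|(f^{n})^{*}\omega(x)|_{h^{\varepsilon}}\leq\|(D_{x}f^{n})^{\wedge k}\|_{h^{\varepsilon}\to h^{\varepsilon}}|\omega(f^{n}x)|_{h^{\varepsilon}}$. Third, the key dynamical estimate
\begin{align*}
\|(D_{x}f^{n})^{\wedge k}\|_{h^{\varepsilon}\to h^{\varepsilon}}\leq Ce^{n(\Lambda_{k}(x)+k\varepsilon)}.
\end{align*}
Chaining these and using $\|\omega\|_{C^{0}}<\infty$ (harmonic forms on compact manifolds are smooth) gives the pointwise inequality $|(f^{n})^{*}\omega(x)|_{g}\leq C'\|h^{\varepsilon}_{x}\|_{g}^{k/2}e^{n(\Lambda_{k}(x)+k\varepsilon)}$.

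For the dynamical estimate itself, telescoping the defining series yields the one-step bound $\|D_{x}f|_{H_{i}(x)}\|_{h^{\varepsilon}}\leq e^{\tilde{\lambda}_{i}(x)+\varepsilon}$, and iterating gives $\|D_{x}f^{n}|_{H_{i}(x)}\|_{h^{\varepsilon}}\leq e^{n(\tilde{\lambda}_{i}(x)+\varepsilon)}$. Because $h^{\varepsilon}=\sum_{i}h_{i}^{\varepsilon}$ forces the Oseledec splitting to be \emph{$h^{\varepsilon}$-orthogonal}, Hadamard's inequality applied to an adapted $h^{\varepsilon}$-ON basis of $\Lambda^{k}T_{x}X$ promotes the per-direction bounds to the $k$-vector bound, with the maximum over multi-indices realizing the sum $\Lambda_{k}(x)$ of the top $k$ Lyapunov exponents. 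Integrating the chained pointwise estimate against $|\omega|_{g}\leq\|\omega\|_{C^{0}}$, bounding $\Lambda_{k}(x)\leq\|\Lambda_{k}(x,Df,V)\|_{L^{\infty}}$ $V_{g}$-a.e.\ (valid since $V\sim V_{g}$), and using the $L^{k/2}$-hypothesis to absorb $\int_{X}\|h^{\varepsilon}\|_{g}^{k/2}\intd V_{g}<\infty$ into a constant, I obtain $e^{n\,\text{Re}(\lambda)}\|\omega\|_{L^{2}}^{2}\leq C''e^{n(\|\Lambda_{k}\|_{L^{\infty}}+k\varepsilon)}$; dividing by $n$ and letting $n\to\infty$ finishes the proof. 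The main obstacle is the Hadamard step in the $\Lambda^{k}$-bound: one must exploit the $h^{\varepsilon}$-orthogonality of the Oseledec splitting (not any $g$-orthogonality, which fails in general) so that products of single-direction expansions genuinely control the $k$-vector expansion, and one must track that the per-direction $\varepsilon$-slack compounds to exactly $k\varepsilon$ in the final exponent.
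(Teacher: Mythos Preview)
Your argument is correct and follows essentially the same route as the paper: the Hodge-orthogonal pairing (Lemmas~4.1--4.2), the Lyapunov-metric operator bound $\|(D_xf^n)^{\wedge k}\|_{h^{\varepsilon}}\leq Ce^{n(\Lambda_k(x)+k\varepsilon)}$ (Lemma~4.6), and the two metric comparisons $g\leq C\,h^{\varepsilon}$ and $\|\cdot\|_{h^{\varepsilon}}\leq C\|\cdot\|_g\|h^{\varepsilon}\|_g^{1/2}$, assembled exactly as in Section~4.2. Your use of Hadamard's inequality in place of the paper's explicit block-determinant computation, and your direct appeal to $\int_X\|h^{\varepsilon}\|_g^{k/2}\,\intd V_g<\infty$ rather than the detour through $h^{\varepsilon,k}\in L^{1/2}$ via Lemma~4.4, are mild streamlinings of the same argument.
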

Here $\norm{\sigma}_{L^{\infty}}$ is the essential supremum of the function $\sigma:X\to\mathbb{C}$ with respect to volume. By using the universal coefficients theorem we obtain the following corollary
\begin{mainCorollary}
Let $f:X\to X$ be a $C^{1}-$diffeomorphism preserving a measure $V$ equivalent to the Riemannian volume. If $h^{\varepsilon}$ is $L^{\dim(X)/2}$ then
\begin{align*}
\log\text{sp}(f_{*})\leq\norm{\Sigma(x,Df,V)}_{L^{\infty}} + \dim(X)\varepsilon.
\end{align*}
So in particular if $\text{sp}(f_{*}) > 1$ and $\varepsilon$ is sufficiently small then there exists a set of positive volume where $f$ has at least one positive Lyapunov exponent and one negative Lyapunov exponent.
\end{mainCorollary}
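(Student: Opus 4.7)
My plan is to reduce the claim to a degree-by-degree application of Theorem B. Over $\mathbb{R}$ the universal coefficients theorem identifies $H^k(X;\mathbb{R})$ with the dual of $H_k(X;\mathbb{R})$ and conjugates $f_{*,k}$ with the transpose of $H^k(f)$, so
\[
\log\text{sp}(f_*)=\log\text{sp}(H^*(f))=\max_{0\le k\le n}\log\text{sp}(H^k(f))
\]
where $n=\dim(X)$. The degree $k=0$ contributes $0$, so only $1\le k\le n$ needs to be treated. Since $(X,V_g)$ has finite measure, Hölder's inequality gives the inclusion $L^{n/2}\subseteq L^{k/2}$ for every $k\le n$, so the hypothesis $h^{\varepsilon}\in L^{n/2}$ supplies the integrability required by Theorem B in every relevant degree.

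If $e^{\lambda}$ is an eigenvalue of $H^k(f)$, Lemma $3.1$ produces a non-trivial solution of $\eqref{MainEquationIntroduction}$ for that $\lambda$, and Theorem B then yields $\text{Re}(\lambda)\le\norm{\Lambda_k(x,Df,V)}_{L^{\infty}}+k\varepsilon$. Since the sum of the top $k$ Lyapunov exponents cannot exceed the sum of the positive ones, $\Lambda_k(x,Df,V)\le\Sigma(x,Df,V)$ pointwise, and combined with $k\le n$ this upgrades to $\text{Re}(\lambda)\le\norm{\Sigma(x,Df,V)}_{L^{\infty}}+n\varepsilon$. Taking the maximum over $k$ and over the eigenvalues of $H^k(f)$ gives the stated bound on $\log\text{sp}(f_*)$.

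For the final assertion, fix $\varepsilon<(\log\text{sp}(f_*))/n$; the bound then forces $\norm{\Sigma(\cdot,Df,V)}_{L^{\infty}}>0$, producing a set $A$ of positive volume on which $f$ has a positive Lyapunov exponent. To extract a negative exponent on a set of positive volume, I would apply the same machinery to $f^{-1}$: the substitution $m\mapsto -m$ in the defining sum shows $h^{\varepsilon}$ is the same object for $f$ and $f^{-1}$, so the $L^{n/2}$ hypothesis transfers, and Poincaré duality (pairing $H^k$ with $H^{n-k}$) forces $\text{sp}((f^{-1})_*)=\text{sp}(f_*)>1$ as well, so the argument reruns to yield a positive-volume set $B$ on which $f$ has a negative Lyapunov exponent.

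The main obstacle I anticipate is showing that the positive-exponent and negative-exponent sets coincide up to null sets, rather than merely existing separately. I would handle this via the coboundary identity $\log|\det Df|=\log\rho-\log\rho\circ f$, where $\rho$ is the density of $V$ with respect to $V_g$ (positive $V_g$-a.e.\ by equivalence). This cocycle telescopes, so that $\tfrac{1}{n}(\log\rho(x)-\log\rho(f^n x))\to 0$ for $V$-almost every $x$; hence $\sum_i\lambda_i(x)=0$ for $V$-almost every Lyapunov regular $x$, whence $\Sigma(x,Df^{-1},V)=\Sigma(x,Df,V)$ almost everywhere and $A=B$ modulo null sets.
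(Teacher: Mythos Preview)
Your argument for the inequality $\log\text{sp}(f_*)\le\norm{\Sigma(x,Df,V)}_{L^\infty}+\dim(X)\varepsilon$ is correct and is exactly the paper's approach (universal coefficients plus Theorem B), with the added virtue that you make explicit the two small points the paper leaves implicit: the inclusion $L^{n/2}\subset L^{k/2}$ on a finite-measure space, and the pointwise bound $\Lambda_k\le\Sigma$.

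For the second assertion your route is unnecessarily circuitous. The key fact---which you yourself establish in the last paragraph---is that $\sum_i\lambda_i(x,Df,V)=0$ for $V$-a.e.\ $x$. Once you have this, the detour through $f^{-1}$, Poincar\'e duality, and the identification $A=B$ is superfluous: on the very set $A$ where $\Sigma(x,Df,V)>0$ there is a positive exponent, and the vanishing total forces a negative exponent \emph{at the same points}. That one-line deduction is what the paper does. So your middle paragraph (rerunning the machinery for $f^{-1}$) can simply be deleted, and your final paragraph promoted to the argument.

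One small technical remark on that final paragraph: the telescoping identity $\log|\det D_xf^n|=\log\rho(x)-\log\rho(f^nx)$ is correct, but the conclusion $\tfrac{1}{n}\log\rho(f^nx)\to 0$ a.e.\ does not follow from telescoping alone when $\log\rho$ is not known to be integrable. A clean way to close this is to note that Oseledec gives the limit $\tfrac{1}{n}\log|\det D_xf^n|\to\sum_i\lambda_i(x)$, so $\tfrac{1}{n}\log\rho(f^nx)\to-\sum_i\lambda_i(x)$; since $\sum_i\lambda_i$ is $f$-invariant, if it were positive on an invariant set of positive measure then $\rho(f^nx)\to 0$ there, contradicting Poincar\'e recurrence to $\{\rho\ge 1\}$. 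The paper is equally terse on this point, so this is a refinement rather than a correction.
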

In the extreme case where $h^{\varepsilon}$ is $L^{\dim(X)/2}$ for every $\varepsilon>0$ and $f$ is ergodic we can use Pesin's entropy formula to prove Shub's entropy conjecture in this case.
\begin{mainCorollary}
If $f:X\to X$ is a conservative and ergodic  $C^{1+\alpha}-$diffeomorphism with $h^{\varepsilon}$ in $L^{\dim(X)/2}$ for every $\varepsilon > 0$ then $f$ satisfies Shub's entropy conjecture.
\end{mainCorollary}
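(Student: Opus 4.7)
The plan is to chain together Corollary E, Pesin's entropy formula, and the variational principle. Since by hypothesis $h^{\varepsilon}\in L^{\dim(X)/2}$ for every $\varepsilon>0$, Corollary E applies for each such $\varepsilon$ to yield
\begin{align*}
\log\text{sp}(f_{*})\leq\norm{\Sigma(x,Df,V)}_{L^{\infty}}+\dim(X)\varepsilon.
\end{align*}
The first term on the right is independent of $\varepsilon$, so letting $\varepsilon\to 0^{+}$ gives $\log\text{sp}(f_{*})\leq\norm{\Sigma(x,Df,V)}_{L^{\infty}}$.

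Next I would use the ergodicity assumption. Since $f$ is ergodic with respect to $V$, Oseledec's theorem implies that each Lyapunov exponent $\lambda_{i}(x,Df,V)$ is constant $V$-almost everywhere. In particular $\Sigma(x,Df,V)$ equals a constant value $\Sigma$ for $V$-almost every $x$, and since $V$ and $V_{g}$ are equivalent they share the same null sets, so the essential supremum with respect to $V_{g}$ agrees with the $V$-a.e. constant value:
\begin{align*}
\norm{\Sigma(x,Df,V)}_{L^{\infty}}=\Sigma=\int_{X}\Sigma(x,Df,V)\,\intd V(x).
\end{align*}

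Now the $C^{1+\alpha}$ regularity together with the fact that $V$ is absolutely continuous with respect to the Riemannian volume places us in the setting of Pesin's entropy formula, which yields
\begin{align*}
h_{V}(f)=\int_{X}\Sigma(x,Df,V)\,\intd V(x)=\Sigma.
\end{align*}
Combining with the variational principle $h_{V}(f)\leq h_{\text{top}}(f)$ produces the chain
\begin{align*}
\log\text{sp}(f_{*})\leq\Sigma=h_{V}(f)\leq h_{\text{top}}(f),
\end{align*}
which is precisely Shub's entropy conjecture for $f$.

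In this argument the genuinely new content has already been done: the nontrivial step is Corollary E itself, and all of the remaining ingredients (Oseledec's theorem, Pesin's formula, the variational principle) are standard. The main potential obstacle is verifying that the hypotheses of Pesin's entropy formula are met, but both the $C^{1+\alpha}$ regularity and the absolute continuity of $V$ with respect to $V_{g}$ are built into the statement, so there is nothing further to check.
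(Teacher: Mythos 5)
Your proposal is correct and follows essentially the same route as the paper: apply the quantitative bound $\log\text{sp}(f_{*})\leq\norm{\Sigma(x,Df,V)}_{L^{\infty}}+\dim(X)\varepsilon$ for every $\varepsilon>0$, let $\varepsilon\to 0$, use ergodicity (and the equivalence of $V$ and $V_{g}$) to identify the essential supremum with the constant $\int_{X}\Sigma(x,Df,V)\,\intd V$, and then invoke Pesin's entropy formula and the variational principle. The only slip is a citation label: the bound you quote is Corollary D, not Corollary E (which is the statement being proved), so as written the reference looks circular even though the content you use is the right one.
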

\begin{remark}
We note that the conclusion of the Corollary is stronger then Shub's entropy conjecture since we actually prove
\begin{align*}
\log\text{sp}(f_{*})\leq\Sigma(Df) = h_{V}(f)\leq h_{\text{top}}(f)
\end{align*}
where the last equality use Pesin's entropy formula. We also note that we only need the $C^{1+\alpha}$ assumption to be able to apply Pesin's entropy formula, so the corollary holds whenever the system satisfies Pesin's entropy formula.
\end{remark}
Actually by analysing the proof of Theorem B we have
\begin{align*}
\liminf_{n\to\infty}\frac{1}{n}\log\int_{X}\norm{\left(D_{x}f^{n}\right)^{\wedge}}_{g}\intd V_{g}\leq\norm{\Sigma(x;Df)}_{L^{\infty}} + \dim(X)\varepsilon
\end{align*}
if $h^{\varepsilon}$ is $L^{\dim(X)/2}$. Using the main result from \cite{KozlovskiIntForm} we have
\begin{align*}
h_{\text{top}}(f) = \lim_{n\to\infty}\frac{1}{n}\log\int_{X}\norm{\left(D_{x}f^{n}\right)^{\wedge}}\intd V_{g}(x)
\end{align*}
for $C^{\infty}-$diffeomorphisms. So in particular, if $f:X\to X$ is a conservative ergodic $C^{\infty}-$diffeomorphism (or more generally, we only need the Lyapunov exponents to be constant almost everywhere) such that $h^{\varepsilon}$ is $L^{\dim(X)/2}$ for every $\varepsilon>0$ then
\begin{align*}
h_{\text{top}}(f) = \lim_{n\to\infty}\frac{1}{n}\log\int_{X}\norm{\left(D_{x}f^{n}\right)^{\wedge}}\intd V_{g}(x)\leq\Sigma(Df) = h_{V}(f)\leq h_{\text{top}}(f)
\end{align*}
where we've used Pesin's entropy formula and the variational principle. So under these assumptions the volume $V$ must be a measure of maximal entropy.
\begin{mainCorollary}
If $f:X\to X$ is a conservative, ergodic $C^{\infty}-$diffeomorphism with $h^{\varepsilon}\in L^{\dim(X)/2}$ for every $\varepsilon>0$, then $V$ is a measure of maximal entropy for $f$.
\end{mainCorollary}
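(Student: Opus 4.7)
The plan is to concatenate four ingredients, three of which are already assembled in the paragraph immediately preceding the corollary. The only genuinely new step is to extract a strengthened, volume-growth version of Theorem B: whenever $h^{\varepsilon}\in L^{\dim(X)/2}$,
\begin{align*}
\liminf_{n\to\infty}\frac{1}{n}\log\int_{X}\norm{\left(D_{x}f^{n}\right)^{\wedge}}_{g}\intd V_{g}\leq\norm{\Sigma(x;Df)}_{L^{\infty}} + \dim(X)\varepsilon.
\end{align*}
I would obtain this by revisiting the proof of Theorem B and keeping the bound in its integrated $L^{1}$ form rather than passing at the end to an eigenvalue estimate; morally, the $L^{k/2}$ hypothesis on the Lyapunov metric is exactly what converts a pointwise bound on $\norm{(D_{x}f^{n})^{\wedge k}}$ into an integrable one by Cauchy--Schwarz/Hölder against the Jacobian.

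Given this refinement, the rest is bookkeeping. Ergodicity of $f$ makes the Lyapunov exponents $V$-almost everywhere constant, so $\norm{\Sigma(x;Df)}_{L^{\infty}}=\Sigma(Df)$ is a single number. Smoothness of $f$ unlocks Kozlovski's theorem from \cite{KozlovskiIntForm}, which identifies the topological entropy with the integrated volume growth:
\begin{align*}
h_{\text{top}}(f)=\lim_{n\to\infty}\frac{1}{n}\log\int_{X}\norm{\left(D_{x}f^{n}\right)^{\wedge}}\intd V_{g}(x).
\end{align*}
Combining this equality with the refined Theorem B gives $h_{\text{top}}(f)\leq\Sigma(Df)+\dim(X)\varepsilon$, valid for every $\varepsilon>0$ by the standing hypothesis.

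To close the inequality in the other direction I would invoke Pesin's entropy formula: since $f$ is conservative and $C^{\infty}\subset C^{1+\alpha}$, one has $h_{V}(f)=\Sigma(Df)$. The variational principle supplies $h_{V}(f)\leq h_{\text{top}}(f)$. Chaining these yields
\begin{align*}
h_{V}(f)\leq h_{\text{top}}(f)\leq\Sigma(Df)+\dim(X)\varepsilon=h_{V}(f)+\dim(X)\varepsilon,
\end{align*}
and letting $\varepsilon\to 0^{+}$ gives $h_{\text{top}}(f)=h_{V}(f)$, proving that $V$ attains the topological entropy.

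The main obstacle, and really the only nontrivial point, is verifying that the argument behind Theorem B genuinely produces the integrated volume-growth estimate rather than only a spectral-radius estimate. I expect this to be essentially a reorganisation of the already-established proof: one keeps the Hölder estimate on $\int_{X}\norm{(D_{x}f^{n})^{\wedge k}}\,\intd V_{g}$ in place of specialising it to a cohomology class. Everything afterwards is a formal chaining of Kozlovski, Pesin, and the variational principle, together with the trivial observation that the hypothesis on $h^{\varepsilon}$ is assumed for \emph{every} $\varepsilon>0$.
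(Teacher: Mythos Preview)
Your proposal is correct and follows essentially the same approach as the paper: extract the integrated volume-growth bound from the proof of Theorem B, combine it with Kozlovski's formula, Pesin's entropy formula, and the variational principle, and let $\varepsilon\to 0$. The only cosmetic difference is that the paper sends $\varepsilon\to 0$ before invoking Kozlovski, whereas you do so at the end of the chain; this is immaterial.
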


\section{Preliminaries and notation}

Let $(X,g)$ be a smooth, compact, oriented Riemannian manifold without boundary. We will consider a $C^{1}-$diffeomorphism
\begin{align*}
f:X\to X    
\end{align*}
 which will be assumed fixed for the remainder of this section. We denote by $\mathcal{M}(X)$ the space of $f-$invariant Borel probability measures on $X$. We denote by $\mathcal{M}_{\text{erg}}(X)$ the space of $f-$invariant ergodic Borel probability measures on $X$.
 
Let $\pi_{\mathcal{E}}:\mathcal{E}\to X$ be a continuous metric (possibly complex, in which case the metric on $\mathcal{E}$ is assumed to be hermitian) finite rank vector bundle over $X$. We say that a map
\begin{align*}
\Phi:\mathbb{Z}\times\mathcal{E}\to\mathcal{E},\quad\text{denoted }\Phi(n,x)v,\quad x\in X,\text{ }v\in\mathcal{E}_{x},\text{ }n\in\mathbb{Z}
\end{align*}
is a linear cocycle over $f:X\to X$ if it holds that
\begin{align*}
\pi_{\mathcal{E}}\Phi(n,x)v = f^{n}x,\quad x\in X,\text{ }v\in\mathcal{E}_{x},\text{ }n\in\mathbb{Z}
\end{align*}
and if $\Phi(n,x):\mathcal{E}_{x}\to\mathcal{E}_{f^{n}x}$ is linear and satisfy the cocycle equation
\begin{align*}
\Phi(n+m,x)v = \Phi(n,f^{m}x)\Phi(m,x)v,\quad x\in X,\text{ }v\in\mathcal{E}_{x},\text{ }n,m\in\mathbb{Z}.
\end{align*}
If $\mathcal{E}$ is a complex vector bundle we also require that $\Phi(x,n)$ is complex linear.

Let $h:X\to\mathcal{E}^{*}\otimes\mathcal{E}^{*}$ denote the metric on $\mathcal{E}$. We can define a norm of a cocycle $\Phi$ at $x\in X$ as the operator norm of $\Phi(1,x)$
\begin{align*}
\norm{\Phi}_{x} = \sup_{0\neq\nu\in\mathcal{E}_{x}}\frac{\norm{\Phi(1,x)\nu}}{\norm{\nu}}
\end{align*}
where the norm on $\mathcal{E}_{x}$ and $\mathcal{E}_{fx}$ is the norm induced by $h$. We note that if $\Phi$ is a continuous cocycle then the map $x\mapsto\norm{\Phi}_{x}$ is continuous and we can define
\begin{align*}
\norm{\Phi} := \sup_{x}\norm{\Phi}_{x} < \infty.
\end{align*}
If $\pi_{\mathcal{E}^{*}}:\mathcal{E}^{*}\to X$ denotes the dual bundle of $\mathcal{E}$, then any cocycle $\Phi$ in $\mathcal{E}$ over $f$ induces a cocycle $\Phi^{*}$ in $\mathcal{E}^{*}$ over $f^{-1}$. We define this dual cocycle
\begin{align*}
\Phi^{*}(n,f^{n}x):\mathcal{E}_{f^{n}x}^{*}\to\mathcal{E}_{x}^{*}
\end{align*}
as the dual map of the map $\Phi(n,x):\mathcal{E}_{x}\to\mathcal{E}_{f^{n}x}$. We note that if $\Phi$ is a continuous cocycle then so is $\Phi^{*}$.

We have two natural norms on the vector bundle $\pi_{\mathcal{E}^{*}}:\mathcal{E}^{*}\to X$. On the one hand we have the operator norm
\begin{align*}
\norm{u} := \sup_{0\neq\nu\in\mathcal{E}_{x}}\frac{|u(\nu)|}{\norm{\nu}},\quad u\in\mathcal{E}_{x}^{*}
\end{align*}
where $\norm{\nu}$ is the norm of $\nu\in\mathcal{E}_{x}$ with respect to the norm induced by the metric $h$. On the other hand we have a (anti-)isomorphism $\mathcal{E}^{*}\to\mathcal{E}$ defined as the inverse of the map
\begin{align*}
\nu\mapsto h(\cdot,\nu).
\end{align*}
We denote this map by
\begin{align*}
\mathcal{E}_{x}^{*}\ni u\mapsto u^{\sharp}\in\mathcal{E}_{x}
\end{align*}
and define a metric on $\mathcal{E}^{*}$ by
\begin{align*}
h^{*}(u,v) = \overline{h(u^{\sharp},v^{\sharp})}
\end{align*}
where $\overline{z}$ is the complex conjugate of $z\in\mathbb{Z}$. This metric also induces a norm on $\mathcal{E}^{*}$. However by standard Hilbert spaces theory these norms coincide, so we can change between them whenever it is convenient.

\subsection{Lyapunov exponents}

For a cocycle $\Phi:\mathbb{Z}\times\mathcal{E}\to\mathcal{E}$ and some $f-$invariant measure $\mu\in\mathcal{M}(X)$ we can define the \textit{Lyapunov exponent} by
\begin{align*}
\lambda(x,\nu,\Phi,\mu) := \lim_{n\to\infty}\frac{1}{n}\log\norm{\Phi(n,x)\nu},\quad x\in X,\text{ }\nu\in\mathcal{E}_{x}
\end{align*}
where the limit exists for $\mu-$almost every $x$ and every $\nu\in\mathcal{E}_{x}$. By Oseledec's theorem we have a measurable splitting at $\mu-$almost every $x\in X$
\begin{align*}
\mathcal{E}_{x} = \bigoplus_{i = 1}^{k(x)}H_{i}(x)
\end{align*}
such that for $\nu\in H_{i}(x)$ we have
\begin{align*}
\lambda(x,\nu,\Phi,\mu) = \lambda_{i}(x,\Phi,\mu).
\end{align*}
We denote by $k(x)$ the number of distinct Lyapunov exponents at $x$ and $u_{i}(x) = \dim(H_{i}(x))$, then $k$ and $u_{i}$ are $f-$invariant measurable functions. In particular $k(x)$ and $u_{i}(x)$ are constant $\mu-$almost everywhere if $\mu$ is ergodic. If the rank of the vector bundle $\mathcal{E}$ is $r$, then counting with multiplicity we define a decreasing sequence of Lyapunov exponents $\lambda_{1}(x,\Phi,\mu)\geq...\geq\lambda_{r}(x,\Phi,\mu)$. We define the \textit{averaged Lyapunov exponents} by
\begin{align*}
\lambda_{i}(\Phi,\mu) := \int_{X}\lambda_{i}(x,\Phi,\mu)\intd\mu(x).
\end{align*}
If the measure $\mu$ is ergodic then $\lambda_{i}(x,\Phi,\mu) = \lambda_{i}(\Phi,\mu)$ $\mu-$almost everywhere, since the Lyapunov exponents of an ergodic measure are constant.

We define the \textit{maximal Lyapunov exponent} of $\Phi$, with respect to $\mu$, as the limit
\begin{align*}
\lambda^{+}(x,\Phi,\mu) := \lim_{n\to\infty}\frac{1}{n}\log\norm{\Phi(n,x)}
\end{align*}
which exists $\mu-$almost everywhere by the subadditive ergodic theorem. To get a Lyapunov exponent independent of $x$ we also define the \textit{averaged maximal Lyapunov exponent} by
\begin{align*}
\lambda^{+}(\Phi,\mu) = \int_{X}\lambda^{+}(x,\Phi,\mu)\intd\mu(x)
\end{align*}
If $\mu$ is ergodic $\lambda^{+}(x,\Phi,\mu) = \lambda^{+}(\Phi,\mu)$ $\mu-$almost everywhere. It can be shown that we have $\lambda^{+}(x,\Phi,\mu) = \lambda_{1}(x,\Phi,\mu)$, see for example \cite{LectureNotesPollicott}.

Given a cocycle $\Phi$ on the vector bundle $\pi_{\mathcal{E}}:\mathcal{E}\to X$ we can define a cocycle on the vector bundle of $k-$vectors
\begin{align*}
\Lambda^{k}(\mathcal{E}) = \mathcal{E}\wedge...\wedge\mathcal{E}
\end{align*}
by the formula
\begin{align*}
\Phi^{\wedge k}(n,x)(v_{1}\wedge...\wedge v_{k}) = (\Phi(n,x)v_{1})\wedge...\wedge(\Phi(n,x)v_{n}).
\end{align*}
Furthermore, see \cite{BeyondUniformBonatti}, we have the following equalities
\begin{align*}
\lambda^{+}(x,\Phi^{\wedge k},\mu) = \sum_{i=1}^{k}\lambda_{i}(x,\Phi,\mu)
\end{align*}
That is $\lambda^{+}(x,\Phi^{\wedge k},\mu)$ is given by the sum of the $k$ largest Lyapunov exponents of $\Phi$. We define
\begin{align*}
\Lambda_{k}(x,\Phi,\mu) := \sum_{i=1}^{k}\lambda_{i}(x,\Phi,\mu),
\end{align*}
and obtain the equality
\begin{align*}
\lambda^{+}(x,\Phi^{\wedge k},\mu) = \Lambda_{k}(x,\Phi,\mu).
\end{align*}
Similarly as above we define
\begin{align*}
\Lambda_{k}(\Phi,\mu) = \int_{X}\Lambda_{k}(x,\Phi,\mu)\intd\mu(x),
\end{align*}
and also get the equality $\Lambda_{k}(\Phi,\mu) = \lambda^{+}(\Phi^{\wedge k},\mu)$. We define the \textit{sum of positive Lyapunov exponents} by
\begin{align*}
\Sigma(x,\Phi,\mu) := \sum_{\lambda_{i}(x,\Phi,\mu) > 0}\lambda_{i}(x,\Phi,\mu),\quad\Sigma(\Phi,\mu) := \int_{X}\Sigma(x,\Phi,\mu)\intd\mu(x)
\end{align*}
which satisfy the inequalities $\Sigma(x,\Phi,\mu)\geq\Lambda_{k}(x,\Phi,\mu)$ and $\Sigma(\Phi,\mu)\geq\Lambda_{k}(\Phi,\mu)$ for all $k$.

Finally to get exponents that are independent of the measure, we make the following definition
\begin{align*}
& \lambda^{+}(\Phi) := \lim_{n\to\infty}\frac{1}{n}\sup_{x\in X}\log\norm{\Phi(n,x)}.
\end{align*}
For every ergodic $\mu$ it's clear that we have the inequality
\begin{align*}
\lambda^{+}(\Phi,\mu)\leq\lambda^{+}(\Phi).
\end{align*}
In the converse direction we have from \cite[Theorem $1$]{SchreiberSubadditiveFunction} the equalities
\begin{align*}
\sup_{x}\limsup_{n\to\infty}\frac{1}{n}\log\norm{\Phi(n,x)} = \lambda^{+}(\Phi) = \sup_{\mu}\lambda^{+}(\Phi,\mu)
\end{align*}
where the supremum in the last equality is over all ergodic $\mu$. We phrase this as a theorem
\begin{theorem}
We have the equality
\begin{align*}
\lambda^{+}(\Phi) = \sup_{\mu}\lambda^{+}(\Phi,\mu)
\end{align*}
where the supremum is over $\mu\in\mathcal{M}_{\text{erg}}^{\varphi}(X)$.
\end{theorem}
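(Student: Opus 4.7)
The plan is to prove the two inequalities separately. The bound $\sup_{\mu}\lambda^{+}(\Phi,\mu)\leq\lambda^{+}(\Phi)$ follows from Fekete's lemma: by the cocycle identity $\norm{\Phi(n+m,x)}\leq\norm{\Phi(m,f^{n}x)}\cdot\norm{\Phi(n,x)}$, the sequence $s_{n}:=\sup_{x\in X}\log\norm{\Phi(n,x)}$ is subadditive, so $s_{n}/n$ decreases to $\lambda^{+}(\Phi)$. For any ergodic $\mu$ and $\mu-$a.e.\ $x$, $\tfrac{1}{n}\log\norm{\Phi(n,x)}\leq s_{n}/n$ for every $n$, so in the limit $\lambda^{+}(x,\Phi,\mu)\leq\lambda^{+}(\Phi)$, and integrating against $\mu$ yields the inequality.

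For the reverse inequality I would construct an invariant measure $\mu$ with $\lambda^{+}(\Phi,\mu)\geq\lambda^{+}(\Phi)$. Choose $y_{n}\in X$ with $\tfrac{1}{n}\log\norm{\Phi(n,y_{n})}\to\lambda^{+}(\Phi)$ and form the Krylov--Bogolyubov empirical measures $\nu_{n}:=\tfrac{1}{n}\sum_{j=0}^{n-1}\delta_{f^{j}y_{n}}$. A weak-$*$ subsequential limit $\mu$ is $f-$invariant by the usual comparison of $\nu_{n}$ with $f_{*}\nu_{n}$. Set $a_{N}(x):=\log\norm{\Phi(N,x)}$; continuity of $\Phi$ together with uniform invertibility (so that $C:=\sup_{x}\max(\norm{\Phi(1,x)},\norm{\Phi(1,x)^{-1}})$ is finite by compactness of $X$) ensure that $a_{N}$ is continuous with $|a_{N}|\leq N\log C$, hence $\int a_{N}\,\intd\nu_{n}\to\int a_{N}\,\intd\mu$ along the chosen subsequence.

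The main step is to establish $\int a_{N}\,\intd\mu\geq N\lambda^{+}(\Phi)$ for every $N\geq 1$. For each shift $0\leq i<N$, writing $n-i=k_{i}N+r_{i}$ with $0\leq r_{i}<N$, iterating subadditivity along the orbit of $y_{n}$ gives
\begin{align*}
a_{n}(y_{n})\leq a_{i}(y_{n})+\sum_{j=0}^{k_{i}-1}a_{N}(f^{i+jN}y_{n})+a_{r_{i}}(f^{i+k_{i}N}y_{n}).
\end{align*}
Summing over $i=0,\ldots,N-1$, the double sum collapses to $\sum_{\ell=0}^{n-N}a_{N}(f^{\ell}y_{n})$, since each $\ell$ is hit exactly once by $(\ell\bmod N,\lfloor\ell/N\rfloor)$, while the boundary contributions together with the at most $N-1$ omitted tail indices $\ell\in\{n-N+1,\ldots,n-1\}$ are bounded in absolute value by a constant $C_{N}$ independent of $n$ (using $|a_{k}|\leq k\log C$). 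Thus $N\cdot a_{n}(y_{n})\leq n\int a_{N}\,\intd\nu_{n}+C_{N}$; dividing by $n$ and passing to the limit along the subsequence yields the claim.

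Finally, by $f-$invariance of $\mu$ the sequence $N\mapsto\int a_{N}\,\intd\mu$ is subadditive, so Kingman's subadditive ergodic theorem gives $\lambda^{+}(\Phi,\mu)=\inf_{N}\tfrac{1}{N}\int a_{N}\,\intd\mu\geq\lambda^{+}(\Phi)$. The pointwise limit $\lambda^{+}(x,\Phi)=\lim_{n}\tfrac{1}{n}\log\norm{\Phi(n,x)}$ depends only on $x$, so $\mu\mapsto\lambda^{+}(\Phi,\mu)=\int\lambda^{+}(x,\Phi)\,\intd\mu$ is affine, and the ergodic decomposition $\mu=\int\mu_{e}\,\intd P(e)$ then produces an ergodic $\mu_{e}$ with $\lambda^{+}(\Phi,\mu_{e})\geq\lambda^{+}(\Phi,\mu)\geq\lambda^{+}(\Phi)$. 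The main obstacle is the combinatorial bookkeeping in the subadditivity step; once uniform invertibility supplies the bound $|a_{k}(x)|\leq k\log C$, the boundary errors vanish after dividing by $n$, and the rest of the argument is routine.
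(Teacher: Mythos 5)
Your proof is correct, and it takes a different route from the paper: the paper does not prove this statement at all, but simply quotes it as Theorem~1 of Schreiber's work on subadditive functions (the variational principle $\sup_{x}\limsup_{n}\frac{1}{n}\log\norm{\Phi(n,x)}=\lambda^{+}(\Phi)=\sup_{\mu}\lambda^{+}(\Phi,\mu)$), whereas you give a self-contained proof by the standard mechanism behind such results: Fekete for the easy inequality, and for the converse an empirical-measure (Krylov--Bogolyubov) construction along near-maximizing orbit segments, the shifted block decomposition of $a_{n}$ into copies of $a_{N}$, passage to a weak-$*$ limit using continuity and two-sided boundedness of $a_{N}$ (your uniform invertibility bound $|a_{k}|\leq k\log C$ is legitimately available here, since the cocycles in the paper are defined over $\mathbb{Z}$, e.g.\ $Df^{\wedge k}$ for a diffeomorphism), then Kingman and the ergodic decomposition. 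Your combinatorial claim that the double sum collapses exactly onto $\sum_{\ell=0}^{n-N}a_{N}(f^{\ell}y_{n})$ does check out: for each residue class $i$ the last admissible block start is $n-r_{i}-N$, which is precisely the largest element of that class not exceeding $n-N$, so nothing in $\{0,\dots,n-N\}$ is missed or double-counted, and the remaining discrepancies are the $O_{N}(1)$ boundary and tail terms you control. Two cosmetic points: in the last step the pointwise limit $\lambda^{+}(x,\Phi)$ exists only $\mu$-a.e., so one should work with the everywhere-defined $\limsup$ (which agrees a.e.\ with the limit for every invariant measure) when disintegrating $\mu$ into ergodic components; and strictly speaking only upper semicontinuity and an upper bound on $a_{N}$ are needed for the weak-$*$ step, so the argument would survive without invertibility. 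What each approach buys: the paper's citation is shorter and also records the additional equality with the pointwise supremum of the $\limsup$'s; your argument makes the paper self-contained and exposes exactly which hypotheses (continuity of the cocycle, compactness of $X$) are used.
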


\subsection{Cohomology and Hodge decomposition}

Let $H^{k}(X)$ denote the $k'$th singular cohomology group of $X$. Given a continuous map
\begin{align*}
f:X\to Y
\end{align*}
we denote the induced map on cohomology by $H^{k}(f):H^{k}(Y)\to H^{k}(X)$. For any $k$ we also denote by $\Omega^{k}(X) := \Gamma(\Lambda^{k}(\text{T}^{*}X))$ the space of smooth $k-$forms. Given some smooth $f:X\to Y$ we define the pullback $f^{*}:\Omega^{k}(Y)\to\Omega^{k}(X)$ of differential forms by the formula
\begin{align*}
f^{*}\omega_{x}(X_{1},...,X_{k}) := \omega_{fx}(D_{x}f(X_{1}),...,D_{x}f(X_{k})).
\end{align*}
We note that this formula makes sense for $C^{1}-$maps as well. Let 
\begin{align*}
\intd:\Omega^{k}(X)\to\Omega^{k+1}(X)
\end{align*}
denote the exterior derivative. We obtain the de Rahm cohomology groups as
\begin{align*}
H^{k}_{\text{dR}}(X) = \frac{\ker(\intd:\Omega^{k}(X)\to\Omega^{k+1}(X))}{\text{Im}(\intd:\Omega^{k-1}(X)\to\Omega^{k}(X))}.
\end{align*}
The pullback commutes with the differential, so given some smooth map $f:X\to Y$ we obtain a map on cohomology $f^{*}:H_{\text{dR}}^{k}(Y)\to H_{\text{dR}}^{k}(X)$. By de Rahm's theorem we have isomorphisms $H^{k}(X)\to H_{\text{dR}}^{k}(X)$ such that the following diagram commute
\[
\begin{tikzcd}[column sep=5em, every arrow/.append style={-latex}]
H^{k}(Y)\arrow{r}{H^{k}(f)}\arrow{d} & H^{k}(X)\arrow{d} \\
H_{\text{dR}}^{k}(Y)\arrow{r}{f^{*}} & H_{\text{dR}}^{k}(X)
\end{tikzcd}
\]
for some smooth $f:X\to Y$. For the remainder we shall drop the index $\text{dR}$ and simply consider the de Rahm cohomology groups. 

Let $(X,g)$ be a smooth, compact and orientable Riemannian manifold. Furthermore let $f:X\to X$ be a $C^{1}-$diffeomorphism. The Riemannian metric $g$ induces a metric, denoted $g^{k}$, on every bundle $\Lambda^{k}(\text{T}X)$ by defining
\begin{align*}
g^{k}(v_{1}\wedge...\wedge v_{k},w_{1}\wedge...\wedge w_{k}) = \det(g(v_{i},w_{j})),\quad v_{i},w_{j}\in\text{T}_{x}X.
\end{align*}
Since $g$ induces an isomorphism between $\text{T}X$ and $\text{T}^{*}X$ we can also use $g$ to define a metric on $\text{T}^{*}X$ and by the same construction as above we get an inner product, also denoted $g^{k}$, on $\Lambda^{k}(\text{T}^{*}X)$. This induces an inner product on the space $\Omega^{k}(X)$ of $k-$forms by integrating the inner products of two $k-$forms against the Riemannian volume $V_{g}$
\begin{align*}
\langle\omega,\eta\rangle = \int_{X}g_{x}^{k}\left(\omega_{x},\eta_{x}\right)\intd V_{g}(x),\quad\omega,\eta\in\Omega^{k}(X)
\end{align*}
where $g_{x}^{k}\left(\omega_{x},\eta_{x}\right)$ is the inner product between $\omega_{x}$ and $\eta_{x}$. We denote by $\intd^{*}:\Omega^{k+1}(X)\to\Omega^{k}(X)$ the dual of the exterior derivative with respect to this inner product on $\Omega^{k}(X)$. We define the laplacian on $\Omega^{k}(X)$ to be the map defined by
\begin{align*}
\Delta := \intd^{*}\intd + \intd\intd^{*},
\end{align*}
for more about the Laplacian see for example \cite{GeomAnalysis}. We denote by $\mathcal{H}^{k} := \ker(\Delta:\Omega^{k}(X)\to\Omega^{k}(X))$ the space of harmonic $k-$forms. A calculation shows that if $\omega\in\mathcal{H}^{k}$ then
\begin{align*}
0 = \langle\omega,\Delta\omega\rangle = \langle\intd\omega,\intd\omega\rangle + \langle\intd^{*}\omega,\intd^{*}\omega\rangle = \norm{\intd\omega}^{2} + \norm{\intd^{*}\omega}^{2}
\end{align*}
so in particular we have $\intd\omega = 0$ for $\omega\in\mathcal{H}^{k}$, and we can define the quotient map $\mathcal{H}^{k}\to H^{k}(X)$. The Hodge theorem says that the map $\mathcal{H}^{k}\to H^{k}(X)$ is an isomorphism. Furthermore we have the Hodge decomposition
\begin{align*}
\Omega^{k}(X) = \mathcal{H}^{k}\oplus\text{Im}(\intd)\oplus\text{Im}(\intd^{*}).
\end{align*}
Let $L^{2}\Omega^{k}(X)$ be the closure of $\Omega^{k}(X)$ with respect to the inner product induced by $g$. The Hodge decomposition extends to an orthogonal decomposition
\begin{align*}
L^{2}\Omega^{k}(X) = \mathcal{H}_{k}\oplus\overline{\text{Im}(\intd)}\oplus\overline{\text{Im}(\intd^{*})}.
\end{align*}
We note that given a $C^{\infty}-$map $h:X\to X$ we can decompose the map $H^{k}(h):H^{k}(X)\to H^{k}(X)$ as
\begin{align*}
H^{k}(X)\xrightarrow{}\mathcal{H}^{k}\xrightarrow{h^{*}}L^{2}\Omega^{k}(X)\xrightarrow{P}\mathcal{H}^{k}\xrightarrow{}H^{k}(X)
\end{align*}
where $P:L^{2}\Omega^{k}(X)\to\mathcal{H}^{k}$ is the projection map. By approximating a $C^{1}-$map with $C^{\infty}-$maps it follows that this holds for $C^{1}-$maps as well. We have the following lemma
\begin{lemma}
Let $f:X\to X$ be a $C^{1}-$map. Then $H^{k}(f):H^{k}(X)\to H^{k}(X)$ is given by
\begin{align*}
H^{k}(X)\xrightarrow{}\mathcal{H}^{k}\xrightarrow{f^{*}}L^{2}\Omega^{k}(X)\xrightarrow{P}\mathcal{H}^{k}\xrightarrow{}H^{k}(X)
\end{align*}
and if $\omega,\eta\in\mathcal{H}^{k}$ are such that $H^{k}(f)([\omega]) = [\eta]$ then
\begin{align*}
f^{*}\omega = \eta + \alpha
\end{align*}
where $\alpha\in\overline{\text{Im}(\intd)}$ is a continuous section. Furthermore $f^{*}$ preserve $\overline{\ker(\intd)}$ and $\text{Im}(\intd)$.
\end{lemma}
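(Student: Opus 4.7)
The plan is to reduce the $C^1$ case to the smooth case by approximation. For smooth $f$ the statement is essentially the definition of the de Rham isomorphism combined with Hodge theory: if $\omega\in\mathcal{H}^{k}$ then $f^{*}\omega$ is smooth and closed (since $\intd(f^{*}\omega) = f^{*}\intd\omega = 0$ as $\omega$ is harmonic hence closed), so the Hodge decomposition writes $f^{*}\omega = \eta + \intd\beta$ with $\eta = P(f^{*}\omega)\in\mathcal{H}^{k}$ and $\intd\beta\in\text{Im}(\intd)$. By functoriality of de Rham cohomology $[f^{*}\omega] = H^{k}(f)[\omega]$, and since harmonic representatives within a de Rham class are unique, $\eta$ is the harmonic representative of $H^{k}(f)[\omega]$, which is precisely what the factorization in the diagram asserts.

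For a general $C^{1}$-diffeomorphism $f$, pick a sequence $f_{n}\in C^{\infty}(X,X)$ with $f_{n}\to f$ in the $C^{1}$-topology (mollify in local charts after embedding $X\hookrightarrow\mathbb{R}^{N}$ and project back via the nearest-point retraction of a tubular neighborhood). Then $f_{n}^{*}\omega\to f^{*}\omega$ uniformly for every smooth $\omega$, hence also in $L^{2}$. For $n$ large $f_{n}$ is $C^{0}$-close to, and therefore homotopic to, $f$, so $H^{k}(f_{n}) = H^{k}(f)$. Applying the smooth case to $f_{n}$, write $f_{n}^{*}\omega = \eta_{n} + \alpha_{n}$ with $\eta_{n}\in\mathcal{H}^{k}$ the harmonic representative of $H^{k}(f_{n})[\omega] = H^{k}(f)[\omega]$ and $\alpha_{n}\in\text{Im}(\intd)$. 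By uniqueness of the harmonic representative, $\eta_{n} = \eta$ is constant for all large $n$, so $\alpha_{n} = f_{n}^{*}\omega - \eta\to f^{*}\omega - \eta =: \alpha$ in $L^{2}$. Since $\overline{\text{Im}(\intd)}$ is $L^{2}$-closed we get $\alpha\in\overline{\text{Im}(\intd)}$, and $\alpha$ is continuous since $f^{*}\omega$ is continuous and $\eta$ is smooth. This establishes the factorization and the decomposition $f^{*}\omega = \eta + \alpha$ simultaneously.

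For the invariance statements, note first that $f^{*}$ extends to a bounded operator on $L^{2}\Omega^{k}(X)$ because the Jacobian of the $C^{1}$-diffeomorphism $f$ (and of $f^{-1}$) is bounded. For a smooth closed $\omega$ the decomposition above yields $f^{*}\omega\in\mathcal{H}^{k}\oplus\overline{\text{Im}(\intd)} = \overline{\ker(\intd)}$; by density of smooth closed forms in $\overline{\ker(\intd)}$ and $L^{2}$-continuity of $f^{*}$, $f^{*}(\overline{\ker(\intd)})\subset\overline{\ker(\intd)}$. For $\omega = \intd\beta$ with $\beta$ smooth, the approximations satisfy $f_{n}^{*}(\intd\beta) = \intd(f_{n}^{*}\beta)\in\text{Im}(\intd)$; taking the $L^{2}$-limit gives $f^{*}(\intd\beta)\in\overline{\text{Im}(\intd)}$, and continuity of $f^{*}$ on $L^{2}$ extends this to the closure.

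The main obstacle will be the approximation step: one must carefully produce $C^{\infty}$-approximations of a $C^{1}$-self-map of $X$ that converge in $C^{1}$ (keeping the target in $X$), and justify that $C^{0}$-closeness forces homotopy and hence equality of induced cohomology maps. Once this standard smoothing is in hand, the remaining arguments amount to continuity of the Hodge projection on $L^{2}$ and closedness of $\overline{\text{Im}(\intd)}$, both of which are routine.
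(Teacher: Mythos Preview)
Your proposal is correct and follows essentially the same route as the paper: approximate $f$ by smooth $f_{n}$ in $C^{1}$, use homotopy invariance to get $H^{k}(f_{n})=H^{k}(f)$ so that $\eta_{n}=\eta$ is constant, and pass to the limit in $f_{n}^{*}\omega=\eta+\alpha_{n}$. The paper's proof spends almost all of its effort on the step you flag as the main obstacle, namely a detailed local-chart verification that $f_{n}^{*}\omega\to f^{*}\omega$ uniformly; one minor caveat is that your invariance argument invokes $L^{2}$-boundedness of $f^{*}$ via the Jacobian of $f^{-1}$, which presumes $f$ is a diffeomorphism, whereas the lemma is stated for $C^{1}$-maps (the paper's own treatment of this point is a one-line ``similarly'', so neither proof is fully explicit here).
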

\begin{proof}
We note that the first claim follows from the second since $P(\eta + \alpha) = \eta$. So it suffices to show the formula
\begin{align*}
f^{*}\omega = \eta + \alpha
\end{align*}
for $\omega,\eta\in\mathcal{H}^{k}$ such that $H^{k}(f)([\omega]) = [\eta]$ and $\alpha\in\overline{\text{Im}(\intd)}$ continuous.

Let $f_{n}$ be a sequence of $C^{\infty}-$maps such that $f_{n}\to f$ in the $C^{1}-$topology, see \cite[Theorem 2.6]{Hirsch}. If $f_{n}$ is in the same path component as $f$ then $f_{n}$ and $f$ are homotopic so they induce the same map on cohomology. So we may assume without loss of generality that $H^{k}(f_{n}) = H^{k}(f)$. Let $\omega,\eta\in\mathcal{H}^{k}$ be such that
\begin{align*}
H^{k}(f)([\omega]) = [\eta]
\end{align*}
or since $H^{k}(f) = H^{k}(f_{n})$
\begin{align*}
H^{k}(f_{n})([\omega]) = [\eta].
\end{align*}
Since the lemma holds for $C^{\infty}-$maps we have
\begin{align*}
Pf_{n}^{*}\omega = \eta
\end{align*}
or since $f_{n}^{*}$ preserve $\ker\intd = \mathcal{H}^{k}\oplus\text{Im}(\intd)$ we have
\begin{align*}
f_{n}^{*}\omega = \eta + \alpha_{n},\quad\alpha_{n}\in\text{Im}(\intd).
\end{align*}
If it holds that $f_{n}^{*}\omega\to f^{*}\omega$ uniformly then it follows that $\alpha_{n}$ converges to a continuous element in $\overline{\text{Im}(\intd)}$ since
\begin{align*}
\alpha_{n} = f_{n}^{*}\omega - \eta\in\text{Im}(\intd)
\end{align*}
so we're done. So it suffices to show that $f_{n}^{*}\omega\to f^{*}\omega$ uniformly. Since $X$ is compact it suffices to show that $f_{n}^{*}\omega\to f^{*}\omega$ uniformly in some chart about every point $x\in X$. Now, let
\begin{align*}
\psi_{i}:X\supset U_{i}\to B\subset\mathbb{R}^{n}, \quad i = 1,2
\end{align*}
be charts about $x\in X$ and $fx\in X$ where $B$ is the open unit ball in $\mathbb{R}^{n}$. By possibly making $U_{1}$ smaller and $n$ larger we may assume that $f_{n}(U_{1}),f(U_{1})\subset U_{2}$ and that $\psi_{i},\psi_{i}^{-1}$ are uniformly bounded with uniformly bounded derivatives. Since $f_{n}\to f$ in $C^{1}$ we have
\begin{align*}
\psi_{2}f_{n}\psi_{1}^{-1}\to\psi_{2}f\psi_{1}^{-1}, \quad D(\psi_{2}f_{n}\psi_{1}^{-1})\to D(\psi_{2}f\psi_{1}^{-1})
\end{align*}
where we may assume that this convergence is uniform by possibly letting $U_{1}$ be smaller. Let $h_{n},h:B\to B$ denote
\begin{align*}
h = \psi_{2}f\psi_{1}^{-1},\quad h_{n} = \psi_{2}f_{n}\psi_{1}^{-1}.
\end{align*}
Then it holds that $h_{n}\to h$ and $Dh_{n}\to Dh$ uniformly. Let $I = (i_{1},...,i_{k})$ be multiindex $1\leq i_{1} < ... < i_{k}\leq n$ and define
\begin{align*}
e_{I}^{*} = e_{i_{1}}^{*}\wedge...\wedge e_{i_{k}}^{*}
\end{align*}
where $e_{i} = (0,...,1,....,0)$ is a unit vector. We note that
\begin{align*}
& |(h^{*}e_{I}^{*} - h_{n}^{*}e_{I}^{*})(\nu_{1},...,\nu_{k})| = \\ = &
|e_{i_{1}}^{*}\left((D_{x}h - D_{x}h_{n})\nu_{1}\right)|\cdot...\cdot |e_{i_{k}}^{*}\left((D_{x}h - D_{x}h_{n})\nu_{k}\right)|\leq \\ \leq &
\sup_{x}\norm{D_{x}h - D_{x}h_{n}}^{k}\prod_{i = 1}^{k}\norm{\nu_{i}}\to 0,\quad n\to\infty
\end{align*}
where the convergence is uniform if $\norm{\nu_{j}} = 1$. Since $e_{I}^{*}$, for all $I$, form a frame for $\Lambda^{k}(\text{T}^{*}B)$ it follows that $h_{n}^{*}\eta\to h^{*}\eta$ uniformly for every bounded $k-$form $\eta:B\to\Lambda^{k}(\text{T}^{*}B)$. In particular it holds for the section $(\psi_{2}^{-1})^{*}\omega:B\to\Lambda^{k}(\text{T}^{*}B)$ that
\begin{align*}
h_{n}^{*}\left(\psi_{2}^{-1}\right)^{*}\omega\to h^{*}\left(\psi_{2}^{-1}\right)^{*}\omega
\end{align*}
uniformly, but
\begin{align*}
& h_{n}^{*}\left(\psi_{2}^{-1}\right)^{*}\omega = \left(\psi_{1}^{-1}\right)^{*}f_{n}^{*}\psi_{2}^{*}\left(\psi_{2}^{-1}\right)^{*}\omega = \left(\psi_{1}^{-1}\right)^{*}f_{n}^{*}\omega, \\
& h^{*}\left(\psi_{2}^{-1}\right)^{*}\omega = \left(\psi_{1}^{-1}\right)^{*}f^{*}\psi_{2}^{*}\left(\psi_{2}^{-1}\right)^{*}f^{*}\omega = \left(\psi_{1}^{-1}\right)^{*}f^{*}\omega
\end{align*}
or since $\psi_{1}^{*}:\Omega^{k}(B)\to\Omega^{k}(U_{1})$ is an isomorphism we have that $f_{n}^{*}\omega\to f^{*}\omega$ uniformly on $U_{1}$.

Similarly we see that $f^{*}$ preserve $\overline{\ker(\intd)}$ and $\overline{\text{Im}(\intd)}$ since this holds for $f_{n}^{*}$.
\end{proof}
Let $\text{T}^{\mathbb{C}}X$ be the complexification of the tangent bundle with hermitian metric induced by the Riemannian metric. We define the space of complex $k-$forms, denoted $\Omega_{\mathbb{C}}^{k}(X)$, by the same construction as for real $k-$forms but using $\text{T}^{\mathbb{C}}X$. As in the real case we define $L^{2}\Omega_{\mathbb{C}}^{k}(X)$. We can define the pullback on complex $k-$forms by extending it from real $k-$forms and defining it to be complex linear. The laplacian on $\Omega_{\mathbb{C}}^{k}(X)$ is defined by extending the real laplacian to be complex linear. The space of complex harmonic forms, denoted $\mathcal{H}^{k}_{\mathbb{C}}$, is given by 
\begin{align*}
\mathcal{H}^{k}_{\mathbb{C}} = \mathcal{H}^{k}\oplus i\mathcal{H}^{k} = \mathcal{H}^{k}\otimes\mathbb{C}.
\end{align*}
We note that if $e^{\lambda}\in\mathbb{C}$ is an eigenvalue of $H^{k}(f):H^{k}(X)\to H^{k}(X)$ then we have a $\omega\in\mathcal{H}^{k}_{\mathbb{C}}$ such that $f^{*}\omega = \lambda\omega + \alpha$ where $\alpha\in\overline{\text{Im}(\intd)}\subset L^{2}\Omega_{\mathbb{C}}^{k}(X)$ is a continuous complex $k-$form. That is, when we complexify every eigenvalue of $H^{k}(f)$ has an eigenvector.

\section{Proof of main results}

In this section we prove the main result of the paper. We begin by framing the problem of finding bounds for the spectral radius as an equivalent question about finding non-trivial solutions to an equation, see $\eqref{MainEquation}$. We then study the solutions of equation $\eqref{MainEquation}$.

For the remainder of this section, let $(X,g)$ be a compact, oriented $n-$dimensional Riemannian manifold without boundary and let $f:X\to X$ be a $C^{1}-$map. We denote by $H^{k}(f):H^{k}(X)\to H^{k}(X)$ the induced map on the $k'$th cohomology group.

If $e^{\lambda}\in\mathbb{C}$ is a eigenvalue for $H^{k}(f)$ then we can find some harmonic $\omega\in\mathcal{H}_{\mathbb{C}}^{k}$ such that
\begin{align}
\label{MainEquation}
\tag{$\text{Eq}_{\lambda,k}$}
f^{*}\omega = e^{\lambda}\omega + \alpha
\end{align}
for some continuous $\alpha\in\overline{\text{Im}(\intd)}$. It follows that any eigenvalue of $H^{k}(f)$ implies a non-trivial solution of $\eqref{MainEquation}$. On the other hand we recall Definition $2.1$
\begin{definition*}
We say that $\omega,\alpha\in L^{2}\Omega^{k}_{\mathbb{C}}(X)$ is a solution of $\eqref{MainEquation}$ if $\omega\in\mathcal{H}^{k}_{\mathbb{C}}$, $\alpha\in\overline{\text{Im}(\intd)}$ and $\omega,\alpha$ satisfy $\eqref{MainEquation}$.
\end{definition*}
\begin{remark}
Since $\mathcal{H}_{\mathbb{C}}^{k}$ only contains smooth $k-$forms it follows that any solution $\omega,\alpha\in L^{2}\Omega^{k}(X)$ of $\eqref{MainEquation}$ satisfies that $\alpha$ is continuous.
\end{remark}
With this definition there is a one-to-one correspondence between the eigenvalues of $H^{k}(f)$ and the non-trivial solutions of $\eqref{MainEquation}$. It follows that we can bound the spectral radius of $H^{k}(f)$ by bounding the non-trivial solutions of $\eqref{MainEquation}$.
\begin{lemma}
Let $\omega,\alpha\in L^{2}\Omega_{\mathbb{C}}^{k}(X)$ be a solution of $\eqref{MainEquation}$. Then there exists a continuous sequence $\alpha_{n}\in\overline{\text{Im}(\intd)}$ such that
\begin{align*}
\left(f^{n}\right)^{*}\omega = e^{n\lambda}\omega + \alpha_{n}
\end{align*}
where $\alpha_{n}$ is given by
\begin{align*}
\alpha_{n} = e^{(n-1)\lambda}\sum_{j = 0}^{n-1}e^{-j\lambda}\left(f^{j}\right)^{*}\alpha
\end{align*}
\end{lemma}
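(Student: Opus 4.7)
The plan is a straightforward induction on $n$, with the base case $n=1$ being exactly the defining equation $\eqref{MainEquation}$ since $\alpha_{1} = e^{0}\cdot e^{0}\cdot\alpha = \alpha$. For the inductive step, I would exploit the functoriality of pullback, namely $(f^{n+1})^{*} = (f^{n})^{*}\circ f^{*}$, rather than $f^{*}\circ(f^{n})^{*}$, so that I can immediately substitute the equation $f^{*}\omega = e^{\lambda}\omega + \alpha$ into the inductive hypothesis and pull out the $e^{(n+1)\lambda}\omega$ term cleanly.

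Concretely, assuming the formula holds at level $n$, I would compute
\begin{align*}
(f^{n+1})^{*}\omega = (f^{n})^{*}f^{*}\omega = (f^{n})^{*}\left(e^{\lambda}\omega + \alpha\right) = e^{\lambda}(f^{n})^{*}\omega + (f^{n})^{*}\alpha = e^{(n+1)\lambda}\omega + e^{\lambda}\alpha_{n} + (f^{n})^{*}\alpha,
\end{align*}
so the induction reduces to the recursion $\alpha_{n+1} = e^{\lambda}\alpha_{n} + (f^{n})^{*}\alpha$. A short telescoping check against the closed form $\alpha_{n+1} = e^{n\lambda}\sum_{j=0}^{n}e^{-j\lambda}(f^{j})^{*}\alpha$ verifies the recursion, completing the algebraic part.

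It then remains to argue that $\alpha_{n}$ is continuous and lies in $\overline{\text{Im}(\intd)}$. Continuity is inherited from the continuity of $\alpha$ (noted in the remark after Definition $2.1$) together with the fact that $f$ is $C^{1}$, so each pullback $(f^{j})^{*}\alpha$ is a continuous $k$-form, and a finite linear combination of continuous forms is continuous. For membership in $\overline{\text{Im}(\intd)}$, I would invoke the last assertion of Lemma $3.1$, which says $f^{*}$ preserves $\overline{\text{Im}(\intd)}$; iterating gives $(f^{j})^{*}\alpha\in\overline{\text{Im}(\intd)}$ for every $j\geq 0$, and since $\overline{\text{Im}(\intd)}$ is a (closed) linear subspace of $L^{2}\Omega_{\mathbb{C}}^{k}(X)$, the finite sum defining $\alpha_{n}$ stays inside.

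There is essentially no hard step here; the only point requiring mild care is choosing the composition order $(f^{n+1})^{*} = (f^{n})^{*}\circ f^{*}$ so that the substitution of $\eqref{MainEquation}$ is available at the right place, and invoking Lemma $3.1$ to keep the correction terms inside $\overline{\text{Im}(\intd)}$. Everything else is bookkeeping with the geometric series.
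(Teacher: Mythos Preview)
Your proposal is correct and follows essentially the same inductive argument as the paper. The only cosmetic difference is the composition order: the paper writes $(f^{n+1})^{*} = f^{*}\circ(f^{n})^{*}$ and applies the inductive hypothesis first, whereas you use $(f^{n+1})^{*} = (f^{n})^{*}\circ f^{*}$ and substitute $\eqref{MainEquation}$ first; both lead to the same recursion and closed form, and your treatment of continuity and membership in $\overline{\text{Im}(\intd)}$ via Lemma~3.1 matches the paper's.
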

\begin{proof}
We define $\alpha_{n}$ by
\begin{align*}
\alpha_{n} := \left(f^{n}\right)^{*}\omega - e^{n\lambda}\omega.
\end{align*}
Since $f^{*}$ preserve $\overline{\text{Im}(\intd)}$ by Lemma $3.1$ the lemma follows by showing that $\alpha_{n}$ satisfy the formula from the lemma. We note that for $n = 1$ the formula holds since $\omega,\alpha$ is a solution of $\eqref{MainEquation}$. So, we assume that the formula holds for some $n\geq 1$ and have
\begin{align*}
\alpha_{n+1} = & \left(f^{n+1}\right)^{*}\omega - e^{(n+1)\lambda}\omega = f^{*}\left(e^{n\lambda}\omega + e^{(n-1)\lambda}\sum_{j=0}^{n-1}e^{-j\lambda}\left(f^{j}\right)^{*}\alpha\right) - \\ & -
e^{(n+1)\lambda}\omega = \\ = &
e^{n\lambda}\left(f^{*}\omega - e^{\lambda}\omega + e^{-\lambda}\sum_{j = 0}^{n-1}e^{-j\lambda}\left(f^{j+1}\right)^{*}\alpha\right) = \\ = &
e^{n\lambda}\left(\alpha + e^{-\lambda}\sum_{j=1}^{n}e^{-(j-1)\lambda}\left(f^{j}\right)^{*}\alpha\right) = \\ = &
e^{n\lambda}\sum_{j = 0}^{n}e^{-j\lambda}\left(f^{j}\right)^{*}\alpha
\end{align*}
and the formula for $\alpha_{n}$ follows for all $n\geq1$ by induction.
\end{proof}
From this we immediately obtain estimates of $\text{Re}(\lambda)$ in terms of the growth rate, which is essentially contained in \cite{Yomdin1,KozlovskiIntForm}
\begin{lemma}
Let $\omega,\alpha\in L^{2}\Omega_{\mathbb{C}}^{k}(X)$ be solutions of $\eqref{MainEquation}$. If $\omega\neq0$ then
\begin{align*}
\text{Re}(\lambda)\leq\liminf_{n\to\infty}\frac{1}{n}\log\left(\int_{X}\norm{\left(D_{x}f^{n}\right)^{\wedge k}}\intd V_{g}(x)\right)
\end{align*}
\end{lemma}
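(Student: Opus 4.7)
The plan is to use Lemma~4.1 to replace $f^{*}$ by $(f^{n})^{*}$, pair the resulting identity with $\omega$ in the Hermitian $L^{2}$ inner product, and exploit Hodge orthogonality to isolate the factor $e^{n\lambda}$. By Lemma~4.1 we have
\[
(f^{n})^{*}\omega = e^{n\lambda}\omega + \alpha_{n}, \qquad \alpha_{n}\in\overline{\text{Im}(\intd)}.
\]
Since $\omega\in\mathcal{H}_{\mathbb{C}}^{k}$ and the complex Hodge decomposition $L^{2}\Omega_{\mathbb{C}}^{k}(X)=\mathcal{H}_{\mathbb{C}}^{k}\oplus\overline{\text{Im}(\intd)}\oplus\overline{\text{Im}(\intd^{*})}$ is orthogonal (this is recorded in Section~3 and carries over to the closure), we obtain $\langle\omega,\alpha_{n}\rangle=0$ and hence
\[
\langle\omega,(f^{n})^{*}\omega\rangle = e^{n\lambda}\norm{\omega}_{L^{2}}^{2}.
\]
Taking absolute values gives $|\langle\omega,(f^{n})^{*}\omega\rangle| = e^{n\text{Re}(\lambda)}\norm{\omega}_{L^{2}}^{2}$.

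Next I would estimate the same inner product from above pointwise. Directly from the definition of pullback and the operator norm on $k$-covectors,
\[
|(f^{n})^{*}\omega_{x}|_{g}\leq |\omega_{f^{n}x}|_{g}\cdot\norm{\left(D_{x}f^{n}\right)^{\wedge k}},
\]
so applying the pointwise Cauchy--Schwarz inequality to $\langle\omega_{x},(f^{n})^{*}\omega_{x}\rangle_{g}$ and integrating yields
\[
|\langle\omega,(f^{n})^{*}\omega\rangle|\leq\int_{X}|\omega_{x}|_{g}\,|\omega_{f^{n}x}|_{g}\,\norm{\left(D_{x}f^{n}\right)^{\wedge k}}\,\intd V_{g}(x)\leq C^{2}\int_{X}\norm{\left(D_{x}f^{n}\right)^{\wedge k}}\,\intd V_{g}(x),
\]
where $C:=\sup_{y\in X}|\omega_{y}|_{g}<\infty$ because $\omega\in\mathcal{H}_{\mathbb{C}}^{k}$ is smooth and $X$ is compact.

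Combining the two bounds gives $e^{n\text{Re}(\lambda)}\norm{\omega}_{L^{2}}^{2}\leq C^{2}\int_{X}\norm{(D_{x}f^{n})^{\wedge k}}\intd V_{g}$ for every $n\geq 1$. Since $\omega\neq 0$ forces $\norm{\omega}_{L^{2}}>0$, taking $\tfrac{1}{n}\log$ and passing to $\liminf$ kills the constants $C,\norm{\omega}_{L^{2}}$ and delivers exactly the asserted inequality. I do not anticipate a serious obstacle: the only subtle points are that Hodge orthogonality survives in the complex setting and on the closure $\overline{\text{Im}(\intd)}$ (both handled in Section~3), and that the pointwise pullback estimate is essentially the definition of $\norm{(D_{x}f^{n})^{\wedge k}}$. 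The use of $\liminf$ rather than $\limsup$ is automatic because the inequality holds for every individual $n$.
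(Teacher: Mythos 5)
Your proposal is correct and follows essentially the same route as the paper: pair $(f^{n})^{*}\omega = e^{n\lambda}\omega + \alpha_{n}$ (Lemma 4.1) with $\omega$ in $L^{2}$, use Hodge orthogonality to isolate $e^{n\lambda}$, bound $\norm{(f^{n})^{*}\omega_{x}}$ pointwise by $\norm{(D_{x}f^{n})^{\wedge k}}$ times the sup-norm of $\omega$, and take $\tfrac{1}{n}\log$ and $\liminf$. Your version is marginally more careful than the paper's (you track $\norm{\omega}_{L^{2}}^{2}$ explicitly rather than tacitly normalizing, and use $|\omega_{f^{n}x}|_{g}$ instead of $\norm{\omega}_{C^{0}}$ in the pointwise step), but these are cosmetic differences, not a different argument.
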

\begin{proof}
From Lemma $4.1$ we have
\begin{align*}
e^{n\lambda} = \langle\left(f^{n}\right)^{*}\omega,\omega\rangle = \int_{X}\langle\left(f^{n}\right)^{*}\omega_{x},\omega_{x}\rangle\intd V_{g}(x)
\end{align*}
so by the Cauchy-Schwartz inequality
\begin{align*}
e^{n\text{Re}(\lambda)}\leq\int_{X}\norm{\left(f^{n}\right)^{*}\omega_{x}}\norm{\omega_{x}}\intd V_{g}(x)\leq\norm{\omega}_{C^{0}}\int_{X}\norm{\left(f^{n}\right)^{*}\omega_{x}}\intd V_{g}(x).
\end{align*}
Let $\nu\in\text{T}_{x}X\wedge...\wedge\text{T}_{x}X$ be a $k-$vector then
\begin{align*}
\norm{\left(f^{n}\right)^{*}\omega_{x}} = & \sup_{\norm{\nu} = 1}\left|\left(f^{n}\right)^{*}\omega_{x}(\nu)\right| = \sup_{\norm{\nu} = 1}\left|\omega_{f^{n}x}\left(\left(D_{x}f^{n}\right)^{\wedge k}\nu\right)\right|\leq\\ \leq &
\norm{\left(D_{x}f^{n}\right)^{\wedge k}}\norm{\omega}_{C^{0}}.
\end{align*}
Combining these formulas we have
\begin{align*}
\text{Re}(\lambda)\leq\frac{2\log\norm{\omega}_{C^{0}}}{n} + \frac{1}{n}\log\int_{X}\norm{\left(D_{x}f^{n}\right)^{\wedge k}}\intd V_{g}(x)
\end{align*}
and by taking the $\liminf$ on both side the lemma follows.
\end{proof}

\subsection{Proof of Theorem A}

In this section we prove Theorem $A$ and Corollary $A$. We begin by proving Theorem $A$, which follows from Lemma $4.2$ and the fact that
\begin{align*}
\lim_{n\to\infty}\frac{1}{n}\sup_{x}\log\norm{\left(D_{x}f^{n}\right)^{\wedge k}} = & \lambda^{+}(Df^{\wedge k}) = \sup_{\mu}\lambda^{+}(Df^{\wedge k},\mu) = \\ = & \sup_{\mu}\Lambda_{k}(Df,\mu).
\end{align*}
Indeed, for any $\varepsilon>0$ and $n\geq n_{0}(\varepsilon)$ we have
\begin{align*}
\norm{\left(D_{x}f^{n}\right)^{\wedge k}} = e^{n\left(\frac{1}{n}\log\norm{\left(D_{x}f^{n}\right)^{\wedge k}}\right)}\leq e^{n(\lambda^{+}(Df^{\wedge k}) + \varepsilon)}
\end{align*}
so it follows that
\begin{align*}
& \liminf_{n\to\infty}\frac{1}{n}\log\int_{X}\norm{\left(D_{x}f^{n}\right)^{\wedge k}}\intd V_{g}(x)\leq \\ \leq & \liminf_{n\to\infty}\frac{1}{n}\log\int_{X}e^{n(\lambda^{+}(Df^{\wedge k}) + \varepsilon)}\intd V_{g}(x) = \lambda^{+}(Df^{\wedge k}) + \varepsilon
\end{align*}
so letting $\varepsilon\to0$ Theorem $A$ follows from Lemma $4.2$.

\subsection{Proof of Theorem B}

In this section we prove Theorem $B$ and Corollary $B$. In the remainder of this section we shall assume that $f:X\to X$ preserves a probability measure equivalent to volume. To simplify notation we shall denote the Lyapunov exponents with respect to the invariant volume by $\lambda_{i}(x) := \lambda_{i}(x,Df,V)$. So we have
\begin{align*}
\lambda_{1}(x)\geq\lambda_{2}(x)\geq...\geq\lambda_{\dim(X)}(x)
\end{align*}
and
\begin{align*}
\Lambda_{k}(x) = \sum_{i = 1}^{k}\lambda_{i}(x).
\end{align*}
We begin by giving a equivalent condition for integrability of a metric $h:X\to\text{T}^{*}X\otimes\text{T}^{*}X$. Let $\mathcal{E}\to X$ be a continuous metric vector bundle over $X$ of rank $r$ and with metric $g$. Furthermore let $F:\mathcal{E}\to\mathcal{E}$ be a cocycle over $f:X\to X$. We note that there always exists a measurable global $g-$orthonormal frame of $\mathcal{E}$, which can be defined in charts and then glued together with discontinuities where the different charts meet. For a $g-$orthonormal frame $e_{1},...,e_{r}\in\Gamma(\mathcal{E})$ and a metric $h:X\to\mathcal{E}^{*}\otimes\mathcal{E}^{*}$ we define
\begin{align*}
h_{ij}(x) := h_{x}(e_{i}(x),e_{j}(x)):X\to\mathbb{R}.
\end{align*}
Using Einsteins summation convention we can calculate the norm of $h_{x}\in\mathcal{E}_{x}^{*}\otimes\mathcal{E}_{x}^{*}$ in terms of $h_{ij}$
\begin{align*}
\norm{h_{x}}^{2} = & \norm{h_{ij}(x)e^{i}(x)\otimes e^{j}(x)}^{2} = \\ = &
\sum_{i,j}|h_{ij}(x)|^{2}g(e^{i}(x),e^{i}(x))g(e^{j}(x),e^{j}(x)) = \sum_{i,j}|h_{ij}(x)|^{2}
\end{align*}
where $e^{i}$ is the dual element of $e_{i}$. Before stating the next lemma, we say that a function (or more generally a section of some metric vector bundle) $\sigma$ is $L^{p}$ with $p\leq 1$ if
\begin{align*}
\int_{X}\norm{\sigma}^{p}\intd V_{g} < \infty.
\end{align*}
For $p < 1$ the integral above is not a norm.
\begin{lemma}
The metric $h$ is $L^{p}$ if and only if each $h_{ij}$ is $L^{p}$ (where we allow $p < 1$). 
\end{lemma}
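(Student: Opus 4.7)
The plan is to reduce the lemma to the pointwise equivalence of (quasi-)norms on the finite-dimensional space of coefficient matrices, and then integrate. Since the paper has already computed $\|h_x\|^2 = \sum_{i,j}|h_{ij}(x)|^2$ in the $g$-orthonormal frame, the norm $\|h_x\|$ is precisely the $\ell^2$-(quasi-)norm of the matrix $(h_{ij}(x))_{i,j} \in \mathbb{R}^{r^2}$, and the lemma asks whether finiteness of $\int_X \|h_x\|^p\,dV_g$ is equivalent to finiteness of each $\int_X |h_{ij}(x)|^p\,dV_g$.

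First, I would invoke the standard fact that on the finite-dimensional space $\mathbb{R}^{r^2}$ all $\ell^q$-quasi-norms are equivalent: for every $p>0$ there exist constants $c_p, C_p > 0$, depending only on $p$ and on the rank $r$, such that for every $x$
\begin{align*}
c_p\,\|h_x\|^p \;\leq\; \sum_{i,j}|h_{ij}(x)|^p \;\leq\; C_p\,\|h_x\|^p.
\end{align*}
For $p\ge 2$ one can exhibit explicit constants from the inclusion $\ell^p\hookrightarrow\ell^2$ on $\mathbb{R}^{r^2}$ (namely $c_p=1$ and $C_p=(r^2)^{p/2-1}$), and for $p\le 2$ the analogous estimate goes in the opposite direction; either way, the two sides are comparable pointwise with constants independent of $x$.

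Second, I would integrate this pointwise chain of inequalities against $V_g$. This yields
\begin{align*}
c_p\int_X \|h_x\|^p\,dV_g(x) \;\leq\; \sum_{i,j}\int_X |h_{ij}(x)|^p\,dV_g(x) \;\leq\; C_p\int_X \|h_x\|^p\,dV_g(x),
\end{align*}
and the equivalence claimed in the lemma is immediate: $h$ is $L^p$ iff the left-hand quantity is finite iff the middle finite sum of nonnegative integrals is finite iff each individual $\int_X |h_{ij}|^p\,dV_g$ is finite. Measurability of the $h_{ij}$ follows from the measurability of the frame $e_1,\ldots,e_r$ and of $h$, so all integrals make sense.

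There is essentially no obstacle; the lemma is a change of viewpoint between the section-wise norm of $h$ and its coordinates in a measurable orthonormal frame. The only mild technical point worth flagging is that for $p<1$ the expression $\|\cdot\|_{L^p}$ is not a norm and Minkowski's inequality is not available, so one cannot argue through the triangle inequality; this is precisely why I would work directly with the pointwise equivalence of quasi-norms on the finite-dimensional space $\mathbb{R}^{r^2}$, which remains valid for all $p>0$ (and can alternatively be obtained from the elementary inequality $(a+b)^p\le a^p+b^p$ for $0<p\le 1$, $a,b\ge 0$).
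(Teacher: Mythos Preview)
Your proposal is correct and follows essentially the same approach as the paper: both arguments use that $\|h_x\|$ equals the $\ell^2$-norm of the coefficient vector $(h_{ij}(x))\in\mathbb{R}^{r^2}$ and then compare this pointwise, with constants depending only on $r$, to the size of the individual entries before integrating. The only cosmetic difference is packaging: the paper writes out the two elementary bounds $|h_{ij}(x)|\le\|h_x\|$ and $\|h_x\|\le r\max_{i,j}|h_{ij}(x)|$ by hand (using $\max$ rather than $\sum$ for the converse direction), whereas you invoke the abstract equivalence of $\ell^2$- and $\ell^p$-quasi-norms on $\mathbb{R}^{r^2}$; the content is identical. (Minor remark: your stated explicit constants for $p\ge2$ are swapped---one should have $C_p=1$ and $c_p=(r^2)^{1-p/2}$---but this does not affect the argument.)
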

\begin{proof}
Let $1\leq i,j\leq r$ then we can bound the $L^{p}-$norm of $h_{ij}$ as
\begin{align*}
\norm{h_{ij}}_{L^{p}}^{p} = & \int_{X}|h_{ij}|^{p}\intd V_{g} \leq \int_{X}\left(\sum_{i,j}|h_{ij}|^{2}\right)^{p/2}\intd V_{g} = \\ = &
\int_{X}\norm{h_{x}}^{p}\intd V_{g} = \norm{h}_{L^{p}}^{p}
\end{align*}
so $h_{ij}\in L^{p}(X)$ if $h\in L^{p}(\mathcal{E}^{*}\otimes\mathcal{E}^{*})$. On the other hand, if each $h_{ij}\in L^{p}(X)$ then
\begin{align*}
\norm{h}_{L^{p}}^{p} = & \int_{X}\norm{h_{x}}^{p}\intd V_{g}(x) = \int_{X}\left(\sum_{i,j}|h_{ij}(x)|^{2}\right)^{p/2}\intd V_{g}(x) = \\= &
\int_{X}r^{p}\left(\max_{i,j}|h_{ij}(x)|^{2}\right)^{p/2}\intd V_{g}(x) = r^{p}\int_{X}\max_{i,j}|h_{ij}(x)|^{p}\intd V_{g}(x)
\end{align*}
since each $h_{ij}$ is in $L^{p}$ then $\max_{ij}|h_{ij}|$ is also in $L^{p}$, and it follows that $h$ is $L^{p}$.
\end{proof}
\begin{lemma}
If $h$ is a $L^{p}-$metric on $\text{T}^{\mathbb{C}}X$, then $h$ induces a $L^{p/k}-$metric on $\Lambda^{k}(\text{T}^{\mathbb{C}}X)$.
\end{lemma}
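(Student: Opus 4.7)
The plan is to reduce the statement to a pointwise coordinate computation and then invoke Lemma $4.3$ in both directions. Concretely, fix a measurable $g$-orthonormal frame $e_{1},\dots,e_{r}$ of $\text{T}^{\mathbb{C}}X$ (constructed by patching local orthonormal frames, as discussed before Lemma $4.3$). This induces a measurable $g^{k}$-orthonormal frame of $\Lambda^{k}(\text{T}^{\mathbb{C}}X)$ via
\begin{align*}
e_{I} := e_{i_{1}}\wedge\cdots\wedge e_{i_{k}},\qquad I = (i_{1} < \cdots < i_{k}).
\end{align*}
By the definition of the induced metric on exterior powers, the components of $h^{k}$ in this frame are the $k\times k$ minors
\begin{align*}
h^{k}_{IJ}(x) = h^{k}_{x}(e_{I}(x),e_{J}(x)) = \det\!\bigl(h_{i_{a}j_{b}}(x)\bigr)_{a,b = 1}^{k}.
\end{align*}

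Second, apply Lemma $4.3$ in both directions: the hypothesis that $h\in L^{p}$ is equivalent to $h_{ij}\in L^{p}(X)$ for all $i,j$; and the conclusion that $h^{k}\in L^{p/k}$ reduces to showing $h^{k}_{IJ}\in L^{p/k}(X)$ for each pair of multi-indices $I,J$. Expanding the determinant as a signed sum over permutations,
\begin{align*}
h^{k}_{IJ} = \sum_{\sigma\in S_{k}}\text{sgn}(\sigma)\prod_{a=1}^{k}h_{i_{a}j_{\sigma(a)}},
\end{align*}
each summand is a product of $k$ functions, each in $L^{p}(X)$. Applying Hölder's inequality to the functions $|h_{i_{a}j_{\sigma(a)}}|^{p/k}$ with conjugate exponents $(k,\dots,k)$ yields
\begin{align*}
\int_{X}\left|\prod_{a=1}^{k}h_{i_{a}j_{\sigma(a)}}\right|^{p/k}\intd V_{g} \leq \prod_{a=1}^{k}\left(\int_{X}|h_{i_{a}j_{\sigma(a)}}|^{p}\intd V_{g}\right)^{1/k} < \infty,
\end{align*}
so every summand lies in $L^{p/k}$. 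Finally, summing over $\sigma$: when $p/k\geq 1$ use the triangle inequality, and when $p/k<1$ use the elementary inequality $(a+b)^{r}\leq a^{r}+b^{r}$ for $a,b\geq 0$, $r\leq 1$, to conclude that $h^{k}_{IJ}\in L^{p/k}$ as well.

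The only nontrivial point is accommodating the case $p/k<1$, where $L^{p/k}$ is not a normed space and one cannot directly speak of a ``Hölder inequality between norms''; but since the statement of the lemma only requires finiteness of the integral $\int_{X}\|h^{k}\|^{p/k}\intd V_{g}$, this is handled by working with the integrals directly and exploiting the sub-additivity of $t\mapsto t^{r}$ for $r\leq 1$. Modulo this technicality the argument is a one-line application of multilinear Hölder once the components of $h^{k}$ are expressed in terms of those of $h$.
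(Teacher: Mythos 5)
Your proposal is correct and follows essentially the same route as the paper: pass to a measurable orthonormal frame, reduce via Lemma 4.3 to the components $h^{k}_{IJ}$, observe these are $k\times k$ minors (degree-$k$ polynomials in the $h_{ij}$), and apply Hölder to put each product of $k$ functions in $L^{p/k}$. Your explicit treatment of the finite sum when $p/k<1$ via subadditivity of $t\mapsto t^{r}$ is a welcome bit of extra care that the paper leaves implicit, but it is not a different argument.
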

\begin{proof}
The induced metric on $\Lambda^{k}(\text{T}^{\mathbb{C}}X)$ is given by
\begin{align*}
h_{x}(v_{1}\wedge...\wedge v_{k},w_{1}\wedge...\wedge w_{k}) = \det(h(v_{i},w_{j}))
\end{align*}
for decomposable vectors $v_{1}\wedge...\wedge v_{k},w_{1}\wedge...\wedge w_{k}\in\Lambda^{k}(\text{T}_{x}^{\mathbb{C}}X)$. We recall that if $e_{i}\in\text{T}_{x}X$ is a $g-$orthonormal basis then
\begin{align*}
\{e_{I} = e_{i_{1}}\wedge...\wedge e_{i_{k}}\text{ : }I = (i_{1},...,i_{k}),\text{ }1\leq i_{1} < ... < i_{k}\leq n\}
\end{align*}
is a orthonormal basis of $\Lambda^{k}(\text{T}X)$. Let $e_{i}:X\to\text{T}X$ be a (not necessarily continuous) $g-$orthonormal frame and let $e_{I}:X\to\Lambda^{k}(\text{T}X)$ be the corresponding orthonormal frame of $\Lambda^{k}(\text{T}X)$. By Lemma $4.3$ it suffices to show that each $h_{IJ} = \det(h(e_{i_{k}}(x),e_{j_{\ell}}(x)))$ is $L^{p/k}$. But $\det(h(e_{i_{k}}(x),e_{j_{\ell}}(x)))$ is a homogeneous polynomial of degree $k$ in the variables $h_{ij}$, $1\leq i,j\leq n$. Since each $h_{ij}$ is in $L^{p}$ by Lemma $4.3$, it suffices to show that if $f_{1},...,f_{k}\in L^{p}(X)$ then $f_{1}\cdot...\cdot f_{k}\in L^{p/k}(X)$. This follows from Hölders inequality
\begin{align*}
\int_{X}|f_{1}|^{p/k}\cdot...\cdot|f_{k}|^{p/k}\intd V_{g}\leq\left(\int_{X}|f_{1}|^{p}\intd V_{g}\right)^{1/k}\cdot...\cdot\left(\int_{X}|f_{k}|^{p}\intd V_{g}\right)^{1/k}.
\end{align*}
\end{proof}
\begin{lemma}
If $g$ and $h$ are inner products on some finite dimensional vector space $V$ and $g(u,u)\leq C\cdot h(u,u)$, $C>0$, for all $u\in V$, then the induced inner products, $g^{k},h^{k}$, on $\Lambda^{k}(V)$ also satisfy $g^{k}(w,w)\leq C^{k}\cdot h^{k}(w,w)$ for $w\in\Lambda^{k}(V)$.
\end{lemma}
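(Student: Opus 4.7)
The plan is to reduce everything to a single basis of $V$ which simultaneously diagonalises the two inner products, and then observe that the induced basis of $\Lambda^{k}(V)$ simultaneously diagonalises $g^{k}$ and $h^{k}$. By the standard simultaneous diagonalisation of two positive definite forms (diagonalise the self-adjoint operator $A$ defined by $g(u,v) = h(Au,v)$ in an $h$-orthonormal basis), there exists a basis $e_{1},\dots,e_{n}$ of $V$ which is orthonormal for $h$ and orthogonal for $g$. Writing $\lambda_{i} := g(e_{i},e_{i}) > 0$, the hypothesis $g(u,u)\leq C\cdot h(u,u)$ applied to $u = e_{i}$ yields $\lambda_{i}\leq C$ for every $i$.

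Next I would transfer this to $\Lambda^{k}(V)$. Given a multi-index $I = (i_{1}<\dots<i_{k})$, set $e_{I} := e_{i_{1}}\wedge\dots\wedge e_{i_{k}}$; these vectors form a basis of $\Lambda^{k}(V)$. Using the determinantal formula
\begin{align*}
h^{k}(e_{I},e_{J}) = \det\bigl(h(e_{i_{a}},e_{j_{b}})\bigr)_{a,b},\qquad g^{k}(e_{I},e_{J}) = \det\bigl(g(e_{i_{a}},e_{j_{b}})\bigr)_{a,b},
\end{align*}
one checks that if $I\neq J$ then some index $i_{a}\in I$ is absent from $J$, so the $a$-th row of each matrix vanishes and hence $h^{k}(e_{I},e_{J}) = g^{k}(e_{I},e_{J}) = 0$. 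On the diagonal one gets $h^{k}(e_{I},e_{I}) = 1$ and $g^{k}(e_{I},e_{I}) = \lambda_{i_{1}}\cdots\lambda_{i_{k}}\leq C^{k}$.

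Finally, expanding an arbitrary $w\in\Lambda^{k}(V)$ as $w = \sum_{I}w_{I}\,e_{I}$ in this basis gives
\begin{align*}
g^{k}(w,w) = \sum_{I}\lambda_{i_{1}}\cdots\lambda_{i_{k}}|w_{I}|^{2}\leq C^{k}\sum_{I}|w_{I}|^{2} = C^{k}\,h^{k}(w,w),
\end{align*}
which is exactly the claim. The only non-routine step is the simultaneous diagonalisation, but this is completely standard for two positive definite forms, so I do not expect any real obstacle; all remaining work consists in the combinatorial observation that orthogonality of the basis at the level of $V$ upgrades to orthogonality of $\{e_{I}\}$ at the level of $\Lambda^{k}(V)$ via the determinant identities above.
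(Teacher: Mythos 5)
Your proof is correct and takes essentially the same route as the paper: simultaneously diagonalise the two inner products, observe that the induced basis $\{e_{I}\}$ of $\Lambda^{k}(V)$ is orthogonal for both $g^{k}$ and $h^{k}$ via the determinant formula, and compare the diagonal entries. The only (immaterial) difference is that the paper first normalises $C=1$ and chooses the basis $g$-orthonormal and $h$-orthogonal, bounding $\norm{e_{I}}_{h^{k}}\geq 1$, whereas you choose it $h$-orthonormal and $g$-orthogonal and bound the eigenvalues $\lambda_{i}\leq C$ directly.
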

\begin{proof}
After possibly changing $g$ to $g/C$ we may assume without loss of generality that $C=1$.

Let $e_{i}\in V$ be a $g-$orthonormal basis and a $h-$orthogonal basis. Such a basis always exists since $h(u,v) = g(Qu,v)$ for some positive and $g-$self-adjoint $Q:V\to V$, so there exists a $g-$orthonormal basis of eigenvectors for $Q$. This basis is then also orthogonal for $h$. Let 
\begin{align*}
S_{k} := \{I = (i_{1},...,i_{k})\text{ : }1\leq i_{1} < ... < i_{k}\leq\dim(V)\}
\end{align*}
and for $I\in S_{k}$ we define $e_{I} = e_{i_{1}}\wedge...\wedge e_{i_{k}}$. Then $\{e_{I}\text{ : }I\in S_{k}\}$ forms a basis for $\Lambda^{k}(V)$. This basis is orthonormal with respect to $g^{k}$ and orthogonal with respect to $h^{k}$, which follows since $e_{i}$ is a orthonormal basis for $g$ and a orthogonal basis for $h^{k}$. It follows from the Pythagorean theorem that
\begin{align*}
\norm{u}_{h^{k}}^{2} = \norm{u^{I}e_{I}}_{h^{k}}^{2} = \norm{\left(u^{I}\norm{e_{I}}\right)\frac{e_{I}}{\norm{e_{I}}}}_{h^{k}}^{2} = \sum_{I\in S_{k}}\left|u_{I}\right|^{2}\norm{e_{I}}_{h^{k}}^{2}
\end{align*}
but since $g(u,u)\leq h(u,u)$ for $u\in V$ and the basis $e_{i}$ is $h-$orthogonal we have
\begin{align*}
\norm{e_{I}}_{h^{k}}^{2} = \det(h(e_{i},e_{j})) = \norm{e_{i_{1}}}_{h}^{2}\cdot...\cdot\norm{e_{i_{k}}}_{h}^{2}\geq\norm{e_{i_{1}}}_{g}^{2}\cdot...\cdot\norm{e_{i_{k}}}_{g}^{2} = 1
\end{align*}
so we have
\begin{align*}
\norm{u}_{h^{k}}^{2}\geq\sum_{I\in S_{k}}|u^{I}|^{2} = \norm{u}_{g^{k}}^{2}.
\end{align*}
\end{proof}
Let $h^{\varepsilon}:X\to\text{T}^{*}X\otimes\text{T}^{*}X$ be the Lyapunov metric defined by
\begin{align*}
h^{\varepsilon}(u,v) := \sum_{i}h_{i}^{\varepsilon}(u,v)
\end{align*}
where $h_{i}^{\varepsilon}$ is the inner product defined on $H_{i}(x)$ by
\begin{align*}
h_{i}^{\varepsilon}(u,v) := \sum_{n\in\mathbb{Z}}e^{-2n|\varepsilon|}e^{-2n\Tilde{\lambda}_{i}(x)}\left(f^{n}\right)^{*}g(u,v)
\end{align*}
where $\Tilde{\lambda}_{i}(x)$ is the Lyapunov exponent associated to $H_{i}(x)$. We note that $h^{\varepsilon}$ is measurable and $V_{g}-$almost everywhere defined. Let $h^{\varepsilon,k}$ be the metric on $\Lambda^{k}(\text{T}X)$ induced by $h^{\varepsilon}$. We recall the standard fact, see for example \cite{KatokIntroDynSyst}, that for $u\in H_{i}(x)$ the Lyapunov metric $h^{\varepsilon}$ satisfy
\begin{align*}
\norm{u}_{h^{\varepsilon}}^{2}e^{2n(\lambda_{i}(x) - \varepsilon)}\leq h^{\varepsilon}\left(Df^{n}(u),Df^{n}(u)\right)\leq\norm{u}_{h^{\varepsilon}}^{2}e^{2n(\lambda_{i}(x) + \varepsilon)}.
\end{align*}
We want to extend this to the metric $h^{\varepsilon,k}$.
\begin{lemma}
The metric $h^{\varepsilon,k}$ satisfies
\begin{align*}
\norm{(D_{x}f^{n})^{\wedge k}}_{h^{\varepsilon,k}}^{2}\leq Ce^{2n(\Lambda_{k}(x) + k\varepsilon)}
\end{align*}
where $C$ is a constant that only depends on the manifold $X$.
\end{lemma}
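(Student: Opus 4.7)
The plan is to pass from the operator norm to a basis-wise estimate by choosing a basis of $T_{x}X$ adapted to the Oseledec splitting, and then controlling the Gram determinant via Hadamard's inequality. The key structural input is that the splitting $T_{x}X = \bigoplus_{i}H_{i}(x)$ is $h^{\varepsilon}$-orthogonal by construction of $h^{\varepsilon}$, so each Oseledec subspace can be handled independently and then combined.

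First, at $V_{g}$-almost every $x$, I pick an $h^{\varepsilon}$-orthonormal basis $e_{1},\dots,e_{\dim X}$ of $T_{x}X$ adapted to the Oseledec decomposition, meaning each $e_{j}$ lies in some $H_{\ell(j)}(x)$ and the labels are ordered so that $\tilde{\lambda}_{\ell(1)}(x)\geq\cdots\geq\tilde{\lambda}_{\ell(\dim X)}(x)$. Such a basis exists because distinct $H_{i}(x)$ are mutually $h^{\varepsilon}$-orthogonal; one just takes an $h^{\varepsilon}|_{H_{i}(x)}$-orthonormal basis of each summand and concatenates them. Then the multi-indexed family $\{e_{I}=e_{i_{1}}\wedge\cdots\wedge e_{i_{k}}:I\in S_{k}\}$ forms an $h^{\varepsilon,k}$-orthonormal basis of $\Lambda^{k}(T_{x}X)$, so bounding the operator norm reduces to bounding $\norm{(D_{x}f^{n})^{\wedge k}(e_{I})}_{h^{\varepsilon,k}}$ uniformly in $I$.

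Next, for each $I=(i_{1}<\dots<i_{k})\in S_{k}$, I apply Hadamard's inequality to the Gram determinant of the images:
\[
\norm{(D_{x}f^{n})^{\wedge k}(e_{I})}_{h^{\varepsilon,k}}^{2}
= \det\bigl(h^{\varepsilon}_{f^{n}x}(D_{x}f^{n}e_{i_{a}},D_{x}f^{n}e_{i_{b}})\bigr)_{a,b=1}^{k}
\leq \prod_{j=1}^{k}\norm{D_{x}f^{n}e_{i_{j}}}_{h^{\varepsilon}}^{2}.
\]
Since $e_{i_{j}}\in H_{\ell(i_{j})}(x)$ has unit $h^{\varepsilon}$-norm, the standard Lyapunov-metric estimate quoted just before the lemma gives $\norm{D_{x}f^{n}e_{i_{j}}}_{h^{\varepsilon}}^{2}\leq e^{2n(\tilde{\lambda}_{\ell(i_{j})}(x)+\varepsilon)}$, so
\[
\norm{(D_{x}f^{n})^{\wedge k}(e_{I})}_{h^{\varepsilon,k}}^{2}
\leq e^{2n\bigl(\sum_{j=1}^{k}\tilde{\lambda}_{\ell(i_{j})}(x)+k\varepsilon\bigr)}
\leq e^{2n(\Lambda_{k}(x)+k\varepsilon)},
\]
where the last inequality is the elementary combinatorial fact that the sum of any $k$ of the Lyapunov exponents (counted with multiplicity, as encoded by the ordered basis) is at most the sum $\Lambda_{k}(x)$ of the $k$ largest.

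Finally, because $\{e_{I}\}_{I\in S_{k}}$ is an $h^{\varepsilon,k}$-orthonormal basis of $\Lambda^{k}(T_{x}X)$, the operator norm is dominated by the Hilbert--Schmidt norm, which yields
\[
\norm{(D_{x}f^{n})^{\wedge k}}_{h^{\varepsilon,k}}^{2}
\leq \sum_{I\in S_{k}}\norm{(D_{x}f^{n})^{\wedge k}(e_{I})}_{h^{\varepsilon,k}}^{2}
\leq \binom{\dim X}{k}\,e^{2n(\Lambda_{k}(x)+k\varepsilon)},
\]
so the constant $C=\binom{\dim X}{k}$ depends only on $X$. I do not expect a serious obstacle: the one point requiring care is the ordering of the adapted basis so that $\sum_{j}\tilde{\lambda}_{\ell(i_{j})}(x)\leq\Lambda_{k}(x)$ uniformly in $I$, which is purely combinatorial once the basis has been fixed. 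The heavy lifting---controlling the anisotropic stretching of $Df^{n}$---is already absorbed into the definition of the Lyapunov metric $h^{\varepsilon}$.
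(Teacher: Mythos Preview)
Your argument is correct and in fact tidier than the paper's. Both proofs begin identically---choose an $h^{\varepsilon}$-orthonormal basis of $T_{x}X$ adapted to the Oseledec splitting and reduce to estimating $\norm{(D_{x}f^{n})^{\wedge k}e_{I}}_{h^{\varepsilon,k}}$ on the resulting orthonormal basis $\{e_{I}\}$ of $\Lambda^{k}(T_{x}X)$---but diverge at the Gram determinant. The paper first treats a multi-index lying entirely in a single $H_{i}(x)$ by expanding the determinant via the Leibniz formula and bounding each summand with Cauchy--Schwarz (this is where the factorial appears), and then handles a general multi-index by observing that $D_{x}f^{n}$ preserves the splitting and the $H_{i}(f^{n}x)$ are $h^{\varepsilon}$-orthogonal, so the Gram matrix is block-diagonal and its determinant factors into the single-subspace pieces. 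Your direct application of Hadamard's inequality to the full Gram matrix bypasses both the Leibniz expansion and the block-diagonal step in one line, and yields the sharper constant $C=\binom{\dim X}{k}$ in place of the paper's $\bigl((\dim X)!\bigr)^{\dim X}\bigl(\dim\Lambda^{k}(T_{x}X)\bigr)^{2}$. The paper's route does make the $Df$-invariance of the Oseledec splitting explicit in the proof, but that structure is not actually needed for the estimate; Hadamard applies to any Gram matrix.
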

\begin{proof}
Let $e_{i,1},...,e_{i,u_{i}(x)}\in H_{i}(x)$, $\dim H_{i}(x) = u_{i}(x)$, be a $h^{\varepsilon}-$orthonormal basis. We note that for any $1\leq i_{1} < ... < i_{\ell}\leq u_{i}(x)$ we have
\begin{align*}
& \norm{\left(D_{x}f^{n}\right)^{\wedge\ell}e_{i,i_{1}}\wedge...\wedge e_{i,i_{\ell}}}_{h^{\varepsilon,k}}^{2} = \det\left(h^{\varepsilon}\left(D_{x}f^{n}(e_{i,i_{a}}),D_{x}f^{n}(e_{i,i_{b}})\right)\right) = \\ = &
\sum_{\sigma\in S_{\ell}}\text{sgn}(\sigma)\prod_{j = 1}^{\ell}h^{\varepsilon}(D_{x}f^{n}(e_{i,i_{j}}),D_{x}f^{n}(e_{i,i_{\sigma(j)}}))\leq \\ \leq &
\sum_{\sigma\in S_{\ell}}\prod_{j = 1}^{\ell}\norm{D_{x}f^{n}(e_{i,i_{j}})}_{h^{\varepsilon}}\norm{D_{x}f^{n}(e_{i,i_{\sigma(j)}})}_{h^{\varepsilon}}\leq \\ \leq &
(\ell!)e^{n\ell(\lambda_{i}(x) + \varepsilon)}e^{n\ell(\lambda_{i}(x) + \varepsilon)} = (\ell!)e^{2n\ell(\lambda_{i}(x) + \varepsilon)}
\end{align*}
where $S_{n}$ is the permutation group of $n$ elements. Now let $\ell_{1},...,\ell_{k(x)}\geq0$ be such that $\ell_{1}+...+\ell_{k(x)} = \ell$. Consider $E_{i} = e_{i,q_{i,1}}\wedge...\wedge e_{i,q_{i,\ell_{i}}}$, or $E_{i} = 1$ if $\ell_{i} = 0$, and $E = E_{1}\wedge...\wedge E_{k(x)}$. We denote by $d_{1} = e_{1,q_{1,1}}$, $d_{2} = e_{1,q_{1,2}}$, $d_{\ell_{1} + 1} = e_{2,q_{2,1}}$ and so forth until $d_{\ell} = e_{k(x),q_{k(x),\ell_{k(x)}}}$. That is, $d_{i}$ are chosen such that $d_{1},...,d_{\ell_{1}}\in H_{1}(x)$ are orthonormal, $d_{\ell_{1}+1},...,d_{\ell_{1} + \ell_{2}}\in H_{2}(x)$ are orthonormal and so forth until $d_{\ell_{1}+...+\ell_{k(x)-1} + 1},...,d_{\ell}\in H_{k(x)}(x)$ are orthonormal. So we can write $E = d_{1}\wedge...\wedge d_{\ell}$. Since the spaces $H_{i}(x)$ are orthogonal and since $D_{x}f^{n}(H_{i}(x)) = H_{i}(f^{n}x)$ we have that the matrix
\begin{align*}
A_{ij} := h^{\varepsilon}\left(D_{x}f^{n}(d_{i}),D_{x}f^{n}(d_{j})\right)
\end{align*}
is a block matrix such that
\begin{align*}
A = \begin{pmatrix}
A_{1} & & & \\
& A_{2} & & \\
& & \ddots & \\
& & & A_{k(x)}
\end{pmatrix}
\end{align*}
where each $A_{j}$ is a $\ell_{j}\times\ell_{j}-$matrix given by
\begin{align*}
\left(A_{j}\right)_{ab} = h^{\varepsilon}\left(D_{x}f^{n}(d_{\ell_{1}+...+\ell_{j-1} + a}),D_{x}f^{n}(d_{\ell_{1}+...+\ell_{j-1} + b}),\right)
\end{align*}
so in particular we have from the calculation above that
\begin{align*}
\det(A_{j}) = \norm{\left(D_{x}f^{n}\right)^{\wedge\ell_{j}}E_{j}}_{h^{\varepsilon,k}}^{2}\leq(\ell_{j}!)e^{2n\ell_{j}(\lambda_{j}(x) + \varepsilon)}.
\end{align*}
Now we can calculate the norm of $\left(D_{x}f^{n}\right)^{\wedge\ell}E$ as
\begin{align*}
\norm{\left(D_{x}f^{n}\right)^{\wedge\ell}E}_{h^{\varepsilon,k}}^{2} = & \det(A) = \prod_{j = 1}^{k(x)}\det\left(A_{j}\right)\leq\prod_{j = 1}^{k(x)}(\ell_{j}!)e^{2n\ell_{j}\left(\lambda_{j}(x) + \varepsilon\right)} = \\ = &
\left(\prod_{j=1}^{k(x)}\ell_{j}!\right)e^{2n\left(\sum_{j=1}^{k(x)}\ell_{j}\lambda_{j}(x) + \ell\varepsilon\right)}\leq \\ \leq &
\left((\dim X)!\right)^{\dim X}e^{2n(\Lambda_{\ell}(x) + \ell\varepsilon)}
\end{align*}
where the last inequality follows since $\ell_{1}\lambda_{1}(x) + ... +\ell_{k(x)}\lambda_{k(x)}$ is a sum of $\ell$ Lyapunov exponents which is in particular smaller then the sum of the $\ell$ largest Lyapunov exponents. So let $C^{2} = \left(\dim X!\right)^{\dim X}$. Since $e_{i,\ell}\in\text{T}_{x}X$, where $i = 1,...,k(x)$ and $\ell = 1,...,u_{i}(x)$, forms a orthonormal basis of $\text{T}_{x}X$. If we order the elements $e_{i,\ell}$ as $d_{1},...,d_{\dim(X)}\in\text{T}_{x}X$ then
\begin{align*}
\{d_{I} = d_{i_{1}}\wedge...\wedge d_{i_{\ell}}\text{ : }I = (i_{1},...,i_{\ell}),\text{ }1\leq i_{1} < ... < i_{\ell}\leq\dim(X)\}
\end{align*}
forms a $h^{\varepsilon,k}-$orthonormal basis of $\Lambda^{\ell}(\text{T}_{x}X)$, and from the calculation above it satisfy
\begin{align*}
\norm{\left(D_{x}f^{n}\right)^{\wedge\ell}d_{I}}_{h^{\varepsilon,k}}\leq Ce^{n(\Lambda_{k}(x) + \ell\varepsilon)}.
\end{align*}
Since for any $u\in\Lambda^{\ell}(\text{T}_{x}X)$ with $\norm{u}_{h^{\varepsilon}} = 1$ we have
\begin{align*}
\norm{u}_{h^{\varepsilon,k}}^{2} = \norm{\sum_{I}u_{I}d_{I}}_{h^{\varepsilon}}^{2} = \sum_{I}|u_{I}|^{2} = 1
\end{align*}
so each $|u_{I}|\leq 1$ and we have
\begin{align*}
\norm{\left(D_{x}f^{n}\right)^{\wedge\ell}u}_{h^{\varepsilon,k}}\leq & \sum_{I}|u_{I}|\norm{\left(D_{x}f^{n}\right)^{\wedge\ell}d_{I}}_{h^{\varepsilon,k}}\leq \\ \leq & C\dim\left(\Lambda^{\ell}(\text{T}_{x}X)\right)e^{n(\Lambda_{\ell}(x) + \ell\varepsilon)}\leq C'e^{n(\Lambda_{\ell}(x) + \ell\varepsilon)}
\end{align*}
and the Lemma follows.
\end{proof}
We can now prove Theorem $B$. Let $h:X\to\mathcal{E}^{*}\otimes\mathcal{E}^{*}$ be a metric on a metric vector bundle $\mathcal{E}\to X$ with metric $g$ and rank $r$. Let $e_{i}\in\mathcal{E}_{x}$ be a $g-$orthonormal basis and $u\in\mathcal{E}_{x}$ a unit vector. If we denote by $e^{i}$ the dual basis of $e_{i}$ we have
\begin{align*}
h = \sum_{i,j}h_{ij}e^{i}\otimes e^{j},\quad h_{ij} = h(e_{i},e_{j})
\end{align*}
and we can calculate
\begin{align*}
\norm{h}_{g}^{2} = & \sum_{i,j,k,\ell}h_{ij}\overline{h}_{k\ell}g\left(e^{i}\otimes e^{j},e^{k}\otimes e^{\ell}\right) = \\ = &
\sum_{i,j,k,\ell}h_{ij}\overline{h}_{k\ell}\delta^{ik}\delta^{j\ell} = \sum_{i,j}h_{ij}\overline{h}_{ij} = \sum_{i,j}|h_{ij}|^{2}
\end{align*}
where $\delta^{ab}$ is the Kronecker delta defined by $\delta^{ab} = 1$ if $a = b$ and $\delta^{ab} = 0$ if $a\neq b$. Then we have
\begin{align*}
\norm{u}_{h}^{2} = & \norm{\sum_{i=1}^{r}u_{i}e_{i}}_{h}^{2} = \sum_{i,j}u_{i}\overline{u}_{j}h(e_{i},e_{j})\leq \sum_{i,j}r^{2}\max_{j}|u_{j}|^{2}|h_{ij}|\leq \\ \leq & \sum_{i,j}r^{2}\norm{u}_{g}^{2}|h_{ij}|\leq r^{4}\norm{u}_{g}^{2}\left(\max_{i,j}|h_{ij}|^{2}\right)^{1/2}\leq \\ \leq &
r^{4}\norm{u}_{g}^{2}\norm{h}_{g}
\end{align*}
so $\norm{u}_{h}\leq C\norm{u}_{g}\norm{h}_{g}^{1/2}$ for some constant $C$ that only depends on the rank of $\mathcal{E}$. Let $g$ on $X$, and let $h^{\varepsilon}$ be the Lyapunov metric, and let $g^{k}$, $h^{\varepsilon,k}$ be the induced metrics on $\Lambda^{k}(\text{T}X)$. It's clear that $h^{\varepsilon}(u,u)\geq g(u,u)$ for $u\in H_{i}(x)$ when $H_{i}(x)$ and $h^{\varepsilon}$ is defined. For $u\in\text{T}_{x}X$ let $u_{i}$ be the projection onto $H_{i}(x)$. Using the Cauchy-Schwartz inequality we obtain
\begin{align*}
g(u,u) = & \sum_{i,j}g(u_{i},u_{j})\leq\sum_{i,j}\norm{u_{i}}_{g}\norm{u_{j}}_{g}\leq
\left(k(x)\right)^{2}\sum_{i}g(u_{i},u_{i})\leq \\ \leq & \left(\dim(X)\right)^{2}h^{\varepsilon}(u,u)
\end{align*}
so we have $g(u,u)\leq C\cdot h^{\varepsilon}(u,u)$ where $C$ is a constant that only depends on the manifold. It follows from Lemma $4.5$
\begin{align*}
\norm{\left(D_{x}f^{n}\right)^{\wedge k}}_{g^{k}} = & \sup_{\norm{u}_{g^{k}} = 1}\norm{\left(D_{x}f^{n}\right)^{\wedge k}(u)}_{g^{k}}\leq \\ \leq & 
C^{k}\sup_{\norm{u}_{g^{k}} = 1}\norm{\left(D_{x}f^{n}\right)^{\wedge k}(u)}_{h^{\varepsilon,k}} = \\ = &
C^{k}\sup_{\norm{u}_{g^{k}} = 1}\norm{u}_{h^{\varepsilon,k}}\norm{\left(D_{x}f^{n}\right)^{\wedge k}\left(\frac{u}{\norm{u}_{h^{\varepsilon,k}}}\right)}_{h^{\varepsilon,k}}
\end{align*}
using Lemma $4.6$ and the calculation above we have a constant $L$ such that
\begin{align*}
\norm{\left(D_{x}f^{n}\right)^{\wedge k}}_{g^{k}}\leq & L\sup_{\norm{u}_{g^{k}} = 1}\norm{u}_{h^{\varepsilon,k}}e^{n(\Lambda_{k}(x) + k\varepsilon)}\leq \\ \leq &
\sup_{\norm{u}_{g^{k}} = 1}LC'\norm{u}_{g^{k}}\norm{h^{\varepsilon,k}}_{g^{k}}^{1/2}e^{n(\Lambda_{k}(x) + k\varepsilon)} = \\ = &
C''\norm{h^{\varepsilon,k}}_{g^{k}}^{1/2}e^{n(\Lambda_{k}(x) + k\varepsilon)}.
\end{align*}
If we denote by $\norm{\Lambda_{k}(x)}_{L^{\infty}}$ the essential supremum of $\Lambda_{k}(x)$ then it follows that
\begin{align*}
& \frac{1}{n}\log\int_{X}\norm{\left(D_{x}f^{n}\right)^{\wedge k}}_{g}\intd V_{g}(x)\leq \\ \leq & 
\frac{1}{n}\log C''\int_{X}\norm{h^{\varepsilon,k}_{x}}_{g^{k}}^{1/2}e^{n(\Lambda_{k}(x) + k\varepsilon)}\intd V_{g}(x)\leq \\ \leq &
\frac{\log C''}{n} + \norm{\Lambda_{k}(x)}_{L^{\infty}} + k\varepsilon + \frac{1}{n}\log\int_{X}\norm{h_{x}^{\varepsilon,k}}_{g^{k}}^{1/2}\intd V_{g}(x).
\end{align*}
If $h^{\varepsilon}$ is $L^{k/2}$ then it follows from Lemma $4.4$ that $h^{\varepsilon,k}$ is in $L^{1/2}$ and it follows by letting $n\to\infty$ that
\begin{align*}
& \liminf_{n\to\infty}\frac{1}{n}\log\int_{X}\norm{\left(D_{x}f^{n}\right)^{\wedge k}}_{g}\intd V_{g}(x)\leq\norm{\Lambda_{k}(x)}_{L^{\infty}} + k\varepsilon
\end{align*}
which proves Theorem B.

\section{Proof of Corollaries}

In this section we prove all corollaries stated in section $2$. Corollary A follows immediately from Theorem A and the definition of uniform subexponential growth combined with the universal coefficients theorem. Indeed, by the universal coefficients theorem, see \cite{Hatcher}, we have a natural isomorphism
\begin{align*}
H^{k}(X) = \text{Hom}(H_{k}(X),\mathbb{R})
\end{align*}
so $H^{*}(f)$ can be interpreted as the dual map of $f_{*}$, so the maps share spectrum. The first part of Corollary B follows from Theorem A since the map $\mu\mapsto\Lambda_{k}(Df,\mu) = \lambda^{+}(Df^{\wedge k},\mu)$ is a upper semi-continuous, see \cite{BeyondUniformBonatti}, and therefore attains it's maximum. The second part of Corollary B follows by to passing from cohomology to homology and noting that $\Lambda_{k}(x)\leq\Sigma(x)$ for every $k$.

Corollary C follows from Corollary B. Indeed the first part is clear. The second part follows since if $f$ preserves a continuous volume form, then $\det(D_{x}f^{n})$ is uniformly bounded in $x$ and $n$ so we have
\begin{align*}
0 = \lim_{n\to\infty}\frac{1}{n}\log\det(D_{x}f^{n}) = \sum_{i = 1}^{\dim(X)}\lambda_{i}(x,Df,\mu)
\end{align*}
for every $f-$invariant measure $\mu$ by Oseledec's theorem. The first part of Corollary D follows from Theorem B and the universal coefficient theorem. The second part follows since $f$ preserves a volume, so the sum of Lyapunov exponents vanish. Corollary E follows from Pesin's entropy formula
\begin{align*}
h_{\mu}(f) = \int_{X}\Sigma(x;Df,\mu)\intd\mu(x)
\end{align*}
which can be applied for $f$ $C^{1+\alpha}$. Since $V$ is an ergodic measure $\Sigma(x;Df,V)$ is constant and it follows that $h_{V}(f) = \norm{\Sigma(x;Df,V)}_{L^{\infty}}$.

Finally, Corollary F follows by noting that the proof of Theorem B shows that
\begin{align*}
\liminf_{n\to\infty}\frac{1}{n}\log\int_{X}\norm{\left(D_{x}f^{n}\right)^{\wedge k}}\intd V_{g}(x)\leq\norm{\Sigma(x,Df,V)}_{L^{\infty}} + \dim(X)\varepsilon
\end{align*}
for every $\varepsilon>0$ (under the assumptions of Corollary F). Letting $\varepsilon\to 0$ and using that the Lyapunov exponents are constant almost everywhere we have
\begin{align*}
\liminf_{n\to\infty}\frac{1}{n}\log\int_{X}\norm{\left(D_{x}f^{n}\right)^{\wedge k}}\intd V_{g}(x)\leq\Sigma(Df,V)
\end{align*}
for every $k$. Let $(Df^{n})^{\wedge}$ be the exterior map of $f$ defined on the exterior algebra by
\begin{align*}
\left(D_{x}f^{n}\right)^{\wedge}:\Lambda(\text{T}_{x}X) = \bigoplus_{k = 0}^{\dim(X)}\Lambda^{k}(\text{T}_{x}X)\to\Lambda(\text{T}_{f^{n}x}X)
\end{align*}
Since $f$ is assumed to be a $C^{\infty}-$diffeomorphism we can use the main result from \cite{KozlovskiIntForm} and the Pesin formula to obtain
\begin{align*}
h_{\text{top}}(f) = & \lim_{n\to\infty}\frac{1}{n}\log\int_{X}\norm{\left(D_{x}f^{n}\right)^{\wedge}}\intd V_{g}(x) = \\ = &
\liminf_{n\to\infty}\frac{1}{n}\log\int_{X}\norm{\left(D_{x}f^{n}\right)^{\wedge}}\intd V_{g}(x)\leq \\ \leq &
\Sigma(Df,V) = h_{V}(f)\leq h_{\text{top}}(f)
\end{align*}
and the Corollary follows from the variational principle.

\newpage

\bibliography{main.bib}
\bibliographystyle{abbrv}

\end{document}